\documentclass[11pt,a4paper]{amsart}
\usepackage[T1]{fontenc}
\usepackage{lmodern}
\usepackage[margin=27mm]{geometry}
\usepackage{amsmath,amssymb,amsthm,mathtools}
\usepackage{mathrsfs}
\usepackage{booktabs,array}
\usepackage{microtype}
\usepackage{enumitem}
\usepackage{xcolor}
\usepackage[colorlinks=true,linkcolor=blue!50!black,citecolor=blue!50!black,urlcolor=blue!50!black]{hyperref}
\hypersetup{pdftitle={Local rigidity for the sharp Nash inequality},pdfauthor={}}
\numberwithin{equation}{section}
\newtheorem{theorem}{Theorem}[section]
\newtheorem{lemma}[theorem]{Lemma}
\newtheorem{proposition}[theorem]{Proposition}
\newtheorem{corollary}[theorem]{Corollary}
\theoremstyle{definition}
\newtheorem{definition}[theorem]{Definition}
\theoremstyle{remark}
\newtheorem{remark}[theorem]{Remark}
\DeclareMathOperator{\Ric}{Ric}
\DeclareMathOperator{\Rm}{Rm}
\DeclareMathOperator{\Sc}{Sc}

\newcommand{\R}{\mathbb R}
\newcommand{\dd}{\,dr}

\allowdisplaybreaks[2]
\setlist[enumerate]{itemsep=3pt,topsep=5pt}

\usepackage[nameinlink,noabbrev]{cleveref}
\crefname{lemma}{Lemma}{Lemmas}
\crefname{proposition}{Proposition}{Propositions}
\crefname{corollary}{Corollary}{Corollaries}
\crefname{theorem}{Theorem}{Theorems}
\crefname{remark}{Remark}{Remarks}
\crefname{appendix}{Appendix}{Appendices}
\newcommand{\CN}{C_{\mathrm N}}
\newcommand{\QN}{Q_{\mathrm N}}
\newcommand{\E}{\mathcal E}
\newcommand{\Dn}{\mathcal D_{n,2}}
\newcommand{\Hn}{\mathcal H_{n,2}}
\newcommand{\Ln}{\mathcal L_{n,2}}
\newcommand{\Gf}{\mathcal G_n}

\title{Local rigidity for the sharp Nash inequality}
\author{Liang Cheng, Zhenyu Lu$^{\ast}$ }
    \thanks{*Corresponding author} 
     
     \subjclass[2020]{Primary 53C24; Secondary    53E20 .}

     \thanks{Liang Cheng's  Research partially supported by
          Natural Science Foundation of China 12171180
     }
     
     \address{School of Mathematics and Statistics, and Key Laboratory of Nonlinear Analysis $\&$ Applications (Ministry of Education), Central  China Normal University, Wuhan, 430079, P.R.China}

     \email{chengliang@ccnu.edu.cn }
     
     \address{School of Mathematics and Statistics, and Key Laboratory of Nonlinear Analysis $\&$ Applications (Ministry of Education), Central  China Normal University, Wuhan, 430079, P.R.China}
          \email{   quermassintegral@gmail.com}

\begin{document}
\maketitle
\begin{abstract}
Let $V$ be a nonempty relatively compact open subset of a smooth
Riemannian manifold $(M^n,g)$ without boundary. We prove that
$\CN(V,g)\ge\CN(\R^n)$ and $\int_V\Sc_g\,d\mu_g\ge0$ imply
$\mathrm{Rm}_g=0$ on $V$, where $\CN$ denotes the optimal constant in
the Nash inequality and $\Sc_g$ denotes the scalar curvature.
This improves an earlier rigidity result of  Druet, Hebey, and Vaugon [Int. Math. Res. Not. (1999), no. 14, 735--779].

\noindent\textbf{Keywords.} Nash inequality; optimal constant; scalar curvature; local rigidity; Bessel functions.

\end{abstract}

\section{Introduction}\label{sec:intro}
\subsection{The main result}

Let $(M^n,g)$ be a Riemannian manifold and let
$V\subset M$. The Nash inequality
takes the form
\begin{equation}\label{eq:nash-inequality}
C\left(\int_V u^2\,d\mu_g\right)^{1+2/n}
\le
\left(\int_V |u|\,d\mu_g\right)^{4/n}
\int_V |\nabla u|_g^2\,d\mu_g,
\end{equation}
for every $u\in H_0^1(V)$, where $C>0$ is a constant.
Originally introduced by Nash in his study of the
regularity of solutions to parabolic and elliptic
equations \cite{Nash1958}, the Nash inequality has
become a fundamental tool in analysis, partial
differential equations, and geometric analysis.
It plays an important role in establishing smoothing
and decay estimates for heat semigroups and obtaining
heat kernel bounds \cite{Davies1989,SaloffCoste2002}.
It also reveals connections among Sobolev-type
inequalities, spectral theory, diffusion processes,
and the underlying geometry of Riemannian manifolds.

The optimal constant for the Nash inequality also carries
important geometric information and can lead to rigidity
results for Riemannian manifolds. 
Here, we use the following definition.
\begin{definition}
Let $(M^n,g)$ be a smooth Riemannian manifold. Suppose that $V\Subset M$ is a nonempty open set whose closure is compact in $M$.
    For every nonzero $u\in H_0^1(V)$, define the Nash quotient
\begin{equation}\label{eq:quotient}
\QN^g(u)=
\frac{\left(\int_V|u|\,d\mu_g\right)^{4/n}
\int_V|\nabla u|_g^2\,d\mu_g}
{\left(\int_Vu^2\,d\mu_g\right)^{1+2/n}}
=\frac{\|u\|_1^{4/n}\|\nabla u\|_2^2}{\|u\|_2^{2+4/n}},
\end{equation}
and the optimal Nash constant of $V$ is defined as 
\[
\CN(V,g)=\inf_{0\ne u\in H_0^1(V)}\QN^g(u).
\]
Here,  $d\mu_g$ denotes the Riemannian volume measure.
\end{definition}

Druet,
Hebey, and Vaugon \cite{DruetHebeyVaugon1999} proved
that a  Riemannian manifold satisfies the sharp
Euclidean Nash inequality and
\begin{equation}\label{eq:ric-nonnegative}
    \operatorname{Ric}_g\ge 0
\end{equation}
must be flat.
Motivated by their result, we show that the sharp Euclidean Nash inequality on an open set, together with the nonnegativity of its total scalar curvature, implies local flatness, thereby replacing the pointwise Ricci curvature condition \eqref{eq:ric-nonnegative} with a weaker integral condition. Precisely, the following theorem is the main
result of this paper.

\begin{theorem}\label{thm:main}
Let $(M^n,g)$ be a smooth Riemannian manifold without boundary, with $n\ge2$, and let $V\Subset M$ be a nonempty open set. Suppose that
\begin{equation}\label{eq:hypotheses}
\CN(V,g)\ge\CN(\R^n),\qquad \int_V S\,d\mu_g\ge0.
\end{equation}
Then $\Rm_g=0$ throughout $V$. Here, $\CN(\R^n)$ denotes the optimal Nash constant of $n$-dimensional Euclidean space and $S=\Sc_g$ denotes the scalar curvature of $g$.
\end{theorem}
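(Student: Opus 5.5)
The plan is to reduce the statement to a pointwise asymptotic expansion of the Nash quotient on small geodesic balls, in the spirit of Druet--Hebey--Vaugon. The Euclidean extremal for the Nash inequality (Carlen--Loss) is explicit. On the unit ball it is $u_0(x)=J_0$-type radial Bessel profile minus a constant, i.e. $u_0(r)=r^{1-n/2}J_{n/2-1}(\sqrt{\lambda}\,r)-c$ on $B_1$ and $0$ outside, with the constant chosen so that $u_0$ vanishes on $\partial B_1$ and has vanishing normal derivative there (this is where the Bessel functions enter). Fix $x\in V$ and small $\varepsilon>0$. I would transplant $u_\varepsilon(y)=u_0(d_g(x,y)/\varepsilon)$ into normal coordinates and compute $\QN^g(u_\varepsilon)$ using $d\mu_g=(1-\tfrac16\Ric_{ij}y^iy^j+O(|y|^3))dy$. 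Since $u_\varepsilon$ is radial, only the spherical average of the volume density enters at order $\varepsilon^2$, so the result should be
\begin{equation*}
\QN^g(u_\varepsilon)=\CN(\R^n)\bigl(1-\kappa_n\,S(x)\,\varepsilon^2+O(\varepsilon^3)\bigr),
\end{equation*}
with an explicit constant $\kappa_n$ built from Bessel integrals. The key claim is $\kappa_n>0$. Given that, the hypothesis $\CN(V,g)\ge\CN(\R^n)$ forces $S(x)\le0$ for every $x\in V$. Combined with $\int_V S\,d\mu_g\ge0$ and continuity, this gives $S\equiv0$ on $V$.

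Next, $S\equiv0$ alone does not give flatness, so I would push the expansion to order $\varepsilon^4$ at points where $S$ vanishes identically nearby. In a neighbourhood where $S\equiv0$, the derivatives $\nabla S$ and $\Delta S$ vanish as well. The $\varepsilon^4$ coefficient of the spherically averaged volume density is a combination of $|\Rm|^2$, $|\Ric|^2$, $S^2$ and $\Delta S$. Hence the fourth-order term in the quotient should reduce to $-\varepsilon^4\bigl(a_n|\Rm|^2+b_n|\Ric|^2\bigr)(x)$, up to sign conventions. If $a_n,b_n$ have the right sign, the inequality $\QN^g(u_\varepsilon)\ge\CN(\R^n)$ forces $\Rm(x)=0$.

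If the signs at order $\varepsilon^4$ do not cooperate with the radial test function, I would allow more general test functions. These are $u_\varepsilon$ composed with a perturbation $\phi$ at order $\varepsilon^2$ solving a linearized equation. The extremality of $u_0$ then makes the order-$\varepsilon^4$ gain a quadratic form in the curvature, whose negativity can be arranged. In low dimensions ($n=2$, where $\Rm$ is determined by $S$), the second step is unnecessary: $S\equiv0$ already gives flatness. For $n=3$, $\Rm$ is determined by $\Ric$, so it suffices to control $|\Ric|^2$.

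The main obstacle is the sign computation. This means verifying $\kappa_n>0$ and the analogous sign at fourth order for all $n\ge2$, which requires explicit evaluation of integrals like $\int_0^1 u_0^2r^{n+1}dr$, $\int_0^1 u_0 r^{n+1}dr$ and $\int_0^1 u_0'^2 r^{n+1}dr$ against their unweighted counterparts. I would use Bessel identities (recurrences, $\int r^{\nu+1}J_\nu=r^{\nu+1}J_{\nu+1}$, and Lommel-type integrals) to express each in terms of $J_{n/2}$ at the first zero of the relevant Bessel function. I would then check positivity of the resulting combination, possibly via interlacing bounds on Bessel zeros. A secondary technical point is that $u_0$ is only $C^1$ (Lipschitz) across $\partial B_1$, but it lies in $H^1_0$ of the ball, which suffices for the expansion.
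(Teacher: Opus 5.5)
Your first step is correct. The radial test function at order $\varepsilon^2$ gives $\kappa_n>0$, hence $S\le0$, hence $S\equiv0$. This is exactly the Druet--Hebey--Vaugon result, which the paper cites rather than reproves. Your treatment of $n=2$ also matches the paper.

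\textbf{The gap: the radial expansion fails for $n\ge4$.} Once $S\equiv0$ near $p$, the transplanted extremal gives, with $t=\varepsilon^2$,
\[
\QN^g/\CN(\R^n)=1-c_nt^2\bigl(\tfrac1{120}|\Rm|^2-\tfrac1{45}|\Ric|^2\bigr)+o(t^2),\qquad c_n>0.
\]
Using $|\Rm|^2=|W|^2+\frac4{n-2}|\Ric|^2$, the bracket becomes
\[
\tfrac1{120}|W|^2+\tfrac{7-2n}{90(n-2)}|\Ric|^2 .
\]
The $|\Ric|^2$ coefficient is therefore negative for every $n\ge4$. The radial argument does work for $n=3$, where $W=0$.

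Concretely, take a conformally flat, scalar-flat metric $u^{4/(n-2)}\delta$ with $u>0$ harmonic and $\Ric(p)\ne0$. Along your family the Nash quotient then exceeds $\CN(\R^n)$, so there is no contradiction. For $n\ge4$ your fallback is therefore not optional; it is the entire proof.

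\textbf{Why the fallback is only an assertion.} Extremality of $u_0$ tells you only that the quadratic penalty of a perturbation has the favourable sign. Optimizing a perturbation then adds a finite gain $-L_n[f]^2/(4Q_n[f])\ge0$ to a negative coefficient. Whether that gain wins is a quantitative question, and naive choices lose (for example, nearly constant radial profiles; see the paper's remark). Saying that negativity ``can be arranged'' assumes exactly what must be proved.

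\textbf{What is missing.} You need all of the following.
\begin{enumerate}
\item Take the perturbation along the degree-two harmonic $f(r)\Ric_{ij}(p)\theta^i\theta^j$. The only non-radial term of the volume density that couples linearly at this order is $-\frac16\Ric_{ij}x^ix^j$, which is trace-free since $S=0$. Weyl terms cannot be improved this way, but their coefficient $B_n$ is already positive.
\item Take $f$ vanishing near $r=1$, so the test function stays nonnegative and its $L^1$ norm expands without absolute values.
\item Derive the linear term $L_n[f]\langle A,\Ric\rangle$ and the quadratic term $Q_n[f]|A|^2$, the latter including the $2n/r^2$ angular energy.
\item Prove coercivity of the quadratic form.
\item Solve the Euler--Lagrange equation $-f''-\frac{n-1}{r}f'+\bigl(\frac{2n}{r^2}-\lambda_1\bigr)f=\frac13rF'$. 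Its solution is $\frac1{12}(\Psi-r^2F)$, with $\Psi$ built from $J_{\nu+2}$.
\item Evaluate the optimum, $\Gamma_n=G_n+\frac{4(2n+1)\lambda_1+(n+2)(n-4)(n-2)}{216n(n+2)\lambda_1^2}$.
\item Prove $\Gamma_n>0$ for all $n$. The paper does this with the bounds $n^2/4<\lambda_1\le(\frac n2+1)(\sqrt{\frac n2+2}+1)^2$ and concavity in $\lambda_1$ of the resulting quadratic.
\end{enumerate}
Without these steps your argument yields only $S\equiv0$, and flatness only for $n\le3$.
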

\begin{remark}
Indeed, for any open nonempty subset \(V \subset M\), we always have
$
\CN(V,g)\le\CN(\R^n),
$
because the Nash quotient relative to a family of its extremal functions
\(U_{p,t}\) (see the definition in \eqref{eq:localized-U})
satisfies
$
\frac{\QN^g(U_{p,t})}{\CN(\R^n)}
\to 1,
$
see Theorem~1.3 in \cite{DruetHebeyVaugon1999}.
Hence, the hypothesis
$
\CN(V,g)\ge\CN(\R^n)
$
in Theorem~\ref{thm:main} implies
$
\CN(V,g)=\CN(\R^n).
$
\end{remark}

\begin{remark}
          The rigidity properties of Riemannian manifolds with lower scalar curvature bounds are an important subject of intensive study. 
          As highlighted by Gromov in the following problems: \textit{ Find verifiable criteria for extremality and rigidity, decide which manifolds admit extremal/rigid metrics and describe particular extremal/rigid manifolds; see Problem C in \cite{Gdozen}.}
          For a comprehensive overview of the rigidity results concerning scalar curvature, we refer to the survey \cite{Brendle} and the lectures \cite{Gromovlecture}, along with additional references therein.     Note that  Theorem \ref{thm:main} demonstrates rigidity with respect to the scalar curvature and the optimal Nash constant.
     \end{remark}

\subsection{Strategy of the proof}\label{sec:proof_strategy}

We outline the strategy of our proofs. By evaluating the Nash
quotient on rescaled Euclidean extremal functions concentrating
at a point, Druet, Hebey, and Vaugon \cite{DruetHebeyVaugon1999}
computed the first nontrivial term in its asymptotic expansion
in the concentration scale, showing that $S\le 0$ on $V$ if
$\CN(V,g)\ge\CN(\R^n)$. Combined with the assumption
$\int_V S\,d\mu_g\ge0$, this implies $S\equiv0$ on $V$.
However, this approach, when carried only to the second
nontrivial term, may still obscure finer local geometric
details, thereby limiting its utility in establishing the
desired rigidity results;  we elaborate on this point in Remark \ref{rem:ricci-coefficient}.

Our work is motivated by the first author~\cite{ChengLS},
who used a refined asymptotic expansion to analyze the
short-time behavior of the logarithmic Sobolev functional.
The construction modifies its standard Euclidean extremal,
$(4\pi t)^{-\frac{n}{4}} e^{-\frac{d(p,x)^2}{8t}}$, by
incorporating curvature-dependent correction terms. The test
function is thus chosen as
\[
u(x,t) = (4\pi t)^{-\frac{n}{4}} e^{-\frac{d(p,x)^2}{8t}} \eta(x,t).
\]
Here $\eta$ is a correction factor chosen so that, in normal
coordinates centered at $p$,
\begin{equation}\label{expansion_eta_ricci}
     \eta(x,0)^2 = 1 + A_{ij} x^i x^j + o(|x|^2),
\end{equation}
with $p\in\operatorname{supp}(\eta) \subset\subset V$, where
$A_{ij}=\tfrac{1}{3}\operatorname{Ric}_{ij}(p)$.
With this choice of $\eta$, it was shown in~\cite{ChengLS}
that the functionals admit the following clean short-time
asymptotic expansions:
\begin{equation}\label{expansion_nu_opt}
     \mathcal W(V,g,u,t) = -\frac{1}{6}|\operatorname{Rm}|^2(p)\,t^2 + o(t^2),
\end{equation}
and
\begin{equation}\label{expansion_n_opt}
     \mathcal L(V,g,u,t) = -\operatorname{Sc}(p)\,t - \left( \Delta \operatorname{Sc}(p) + \frac{1}{6}|\operatorname{Rm}|^2(p) \right) t^2 + o(t^2).
\end{equation}
These expansions were instrumental in establishing the following
rigidity result: if an open subset $V \subset M$ satisfies
$\int_V \operatorname{Sc} \, d\mu \geq 0$ and the optimal
constant in the logarithmic Sobolev inequality on $V$ is no
smaller than its Euclidean counterpart, then $V$ must be flat.
The same idea also appears in the study of a similar problem with respect to the Del
Pino--Dolbeault family of Gagliardo--Nirenberg inequalities in
the work of the first author~\cite{ChengGN}.

In this paper, our strategy follows an approach similar to the
refined asymptotic expansion techniques used in the previous works
\cite{ChengLS} and \cite{ChengGN}. However, we need to add a conformally adjusted
radial function $f$ to the correction term. Precisely, we consider the test function
\begin{equation}\label{eq:strategy-test}
 w_{t,f,A}(x)=t^{-n/4}
 \left[U\!\left(\frac{|x|}{\sqrt t}\right)+ f\!\left(\frac{|x|}{\sqrt t}\right)A_{ij}x^i x^j\right],
 \qquad |x|<\sqrt t,
\end{equation}
where $t^{-n/4}U(|x|/\sqrt t)$ is the extremal function for the
Nash quotient (see \eqref{eq:localized-U}), and $\{x^i\}$ are
normal coordinates centered at some point $p$. Here $A$ is a
trace-free symmetric tensor at $p$, and the smooth function $f$
will be chosen appropriately. We remark that without $f$ the construction does not suffice to establish the desired rigidity; this is explained in Remark \ref{rem:ricci-coefficient}.

We calculate the expansion of $\QN^g(w_{t,f,A})$
up to order $t^2$ (see \cref{prop:corrected}):
\begin{equation}\label{eq:strategy-expansion}
\begin{aligned}
 \frac{\QN^g(w_{t,f,A})}{\CN(\R^n)}
 =1-t^2\bigl(&G_n|\Ric(p)|^2+B_n|W(p)|^2\\
 &+L_n[f]\langle A,\Ric(p)\rangle
 +Q_n[f]|A|^2\bigr)
 +o(t^2),\quad  B_n>0
\end{aligned}
\end{equation}
where $L_n$ and $Q_n$ are, respectively, linear and
quadratic functionals of $f$.
Taking $A=\operatorname{Ric}(p)$ reduces the problem
to the following variational problem
:
find a maximizer $f_*$ of
\[
 \mathcal G_n[f]:=G_n+L_n[f]+Q_n[f]
\]
such that $\mathcal G_n[f_*]>0$; see Section~\ref{sec:variation} for the justification.
Using the coercivity of the associated bilinear form,
we apply the Lax--Milgram theorem to obtain a unique
solution to the Euler--Lagrange equation, which is
the maximizer $f_*$. We then solve the corresponding
radial ordinary differential equation explicitly
in terms of Bessel functions; see \cref{prop:maximizer}.
Finally, we establish the positivity of the optimal value
\[
\Gamma_n:=\mathcal G_n[f_*]>0
\qquad\text{for every }n\ge3;
\]
see \cref{thm:positivity}.

\subsection{Organization of this paper}

Section~\ref{sec:bessel} recalls the Euclidean Nash extremal
and the relevant Bessel identities.
Section~\ref{sec:local-expansions} derives the curvature
expansions of the Nash quotient.
Section~\ref{sec:optimization-positivity} solves the radial
variational problem and establishes positivity of the optimal
Ricci curvature coefficient.
Section~\ref{sec:rigidity} proves the main theorem.
The appendix contains the proofs of the Bessel identities and
radial integral formulas used in the main text.

\section{Preliminaries: The Euclidean extremal function for the Nash inequality, some properties and integral formulas for later use}\label{sec:bessel}
\subsection{The Euclidean extremal function for the Nash inequality}
 
Let $v_n=\omega_{n-1}/n$ denote the volume of the Euclidean unit ball
in $\R^n$, where $\omega_{n-1}$ is the area of the unit sphere.
 Carlen and Loss \cite{CarlenLoss} proved that the sharp constant in Nash's inequality is
 \begin{equation}\label{eq:euclidean-nash-constant}
\CN(\R^n)=
\frac{2^{2/n}n\lambda_1 v_n^{2/n}}{(n+2)^{1+2/n}},
\qquad \lambda_1=j_{n/2,1}^{\,2},
\end{equation}
where $j_{n/2,1}$ denotes the first positive zero of the Bessel function $J_{n/2}$.
Here $\lambda_1$ is the first positive Neumann eigenvalue of $-\Delta$
restricted to radial functions on the Euclidean unit ball in $\R^n$;
it depends only on the dimension $n$.
The functions
 \[
a\,U\!\left(\frac{|y-y_0|}{b}\right),
 \qquad a>0,\quad b>0,\quad y_0\in\R^n
\]
 attain equality in the sharp Euclidean Nash inequality
\eqref{eq:nash-inequality}, where
\begin{equation}\label{eq:profile}
 U(r)=
 \begin{cases}
  1-F(r), & 0\leq r\leq1,\\
  0, & r>1,
 \end{cases}
\end{equation}
and
\begin{equation}\label{eq:F-definition}
 F(r)=
 \begin{cases}
  \dfrac{r^{-\nu}J_\nu(\kappa r)}{J_\nu(\kappa)}, & 0<r\leq1,\\[6pt]
  \dfrac{(\kappa/2)^\nu}{\Gamma(\nu+1)J_\nu(\kappa)}, & r=0,
  \end{cases}.
\end{equation}
Here $J_\beta$ denotes the Bessel function of the first kind of order
$\beta$, and $j_{\beta,1}$ denotes the first positive zero of $J_\beta$.
We set
\begin{equation}\label{eq:bessel-parameters}
 \nu=\frac{n-2}{2},\qquad
 \kappa=j_{\nu+1,1}=j_{n/2,1},\qquad \lambda_1=\kappa^2.
\end{equation}

Recall that for $\beta\geq0$ and $s>0$, the Bessel function of the first kind
is defined by
\begin{equation}\label{eq:bessel-series}
 J_\beta(s)=\sum_{k=0}^{\infty}
 \frac{(-1)^k}{k!\,\Gamma(k+\beta+1)}
 \left(\frac{s}{2}\right)^{2k+\beta},
\end{equation}
where $\Gamma$ denotes the Gamma function.
The function $J_\beta$ satisfies the Bessel equation
\[
 s^2J_\beta''(s)+sJ_\beta'(s)+(s^2-\beta^2)J_\beta(s)=0,
 \qquad s>0.
\]

\subsection{Some properties and integral formulas for later use}
We collect some properties and integral formulas involving Bessel functions 
for use in later calculations. The proofs are given in Appendix \ref{app:bessel} 
for the reader's convenience.

First, we need the following properties of functions $F$ and $U$.
The Bessel equation and the recurrence relations in \eqref{eq:bessel-recurrences} give
\begin{equation}\label{eq:profile-ode}
 F''+\frac{n-1}{r}F'+\lambda_1 F=0,
 \qquad F(1)=1,\quad F'(1)=0.
\end{equation}

\begin{lemma}\label[lemma]{lem:profile}
The function $U(|y|)$ is smooth and strictly positive on $B_1$.
It satisfies $U(1)=U'(1)=0$.
\end{lemma}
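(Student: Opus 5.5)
Smoothness comes from the power series, the boundary values from the ODE \eqref{eq:profile-ode}, and positivity from the first-zero property of $j_{\nu+1,1}$; we treat the three claims in that order.

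\emph{Smoothness.} By \eqref{eq:bessel-series}, $s^{-\nu}J_\nu(s)=\sum_{k\ge0}\frac{(-1)^k}{k!\,\Gamma(k+\nu+1)}2^{-2k-\nu}s^{2k}$. This is an entire function of $s^2$. Hence $F(r)=\kappa^\nu (\kappa r)^{-\nu}J_\nu(\kappa r)/J_\nu(\kappa)$ is an even entire function of $r$, and its value at $r=0$ agrees with the second line of \eqref{eq:F-definition}. So $F(|y|)$ is a power series in $|y|^2$ and is real analytic on $\R^n$, and $U(|y|)=1-F(|y|)$ is smooth on $B_1$. We must also check $J_\nu(\kappa)\ne0$. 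Since $0<\kappa=j_{\nu+1,1}$, and the zeros of $J_\nu$ and $J_{\nu+1}$ interlace with $j_{\nu,1}<j_{\nu+1,1}<j_{\nu,2}$, the point $\kappa$ is not a zero of $J_\nu$.

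\emph{Boundary values.} $U(1)=1-F(1)=0$ is immediate. For the derivative, use the recurrence $\frac{d}{ds}\bigl(s^{-\nu}J_\nu(s)\bigr)=-s^{-\nu}J_{\nu+1}(s)$. It gives $F'(r)=-\kappa\, r^{-\nu}J_{\nu+1}(\kappa r)/J_\nu(\kappa)$, so $F'(1)=-\kappa J_{\nu+1}(\kappa)/J_\nu(\kappa)=0$ because $\kappa$ is a zero of $J_{\nu+1}$. Thus $U'(1)=0$, which is the content of \eqref{eq:profile-ode}.

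\emph{Positivity.} This is the only real step; we show $F<1$ on $[0,1)$. The formula above gives $F'(r)=-\kappa r^{-\nu}J_{\nu+1}(\kappa r)/J_\nu(\kappa)$. For $0<r<1$ we have $0<\kappa r<j_{\nu+1,1}$, so $J_{\nu+1}(\kappa r)>0$. Therefore $F'$ has constant sign on $(0,1)$, namely the sign of $-1/J_\nu(\kappa)$.

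It remains to determine that sign, i.e.\ to show $J_\nu(\kappa)<0$. By interlacing, $j_{\nu,1}<\kappa<j_{\nu,2}$, and $J_\nu$ is positive on $(0,j_{\nu,1})$ with a simple sign change at $j_{\nu,1}$, so $J_\nu<0$ on $(j_{\nu,1},j_{\nu,2})$. As an alternative to interlacing, one can use the identity $(s^{\nu+1}J_{\nu+1})'=s^{\nu+1}J_\nu$. The function $s^{\nu+1}J_{\nu+1}$ vanishes at $0$ and at $\kappa$ and is positive in between, so it must be decreasing at $\kappa$, which gives $J_\nu(\kappa)\le0$. Nonvanishing then upgrades this to $J_\nu(\kappa)<0$.

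Hence $F'>0$ on $(0,1)$, so $F$ is strictly increasing on $[0,1]$ and $F(r)<F(1)=1$ for $r<1$. Consequently $U(|y|)=1-F(|y|)>0$ on $B_1$.
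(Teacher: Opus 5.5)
Your proof is correct and follows the paper's argument: smoothness from the even power series, the formula $F'(r)=-\kappa r^{-\nu}J_{\nu+1}(\kappa r)/J_\nu(\kappa)$ with $J_{\nu+1}>0$ on $(0,\kappa)$, and the sign $J_\nu(\kappa)<0$. The only difference is that the paper derives $J_\nu(\kappa)<0$ as in your alternative route, via $J_{\nu+1}'(\kappa)=J_\nu(\kappa)$, but gets strictness from the simplicity of the zero $\kappa$ of $J_{\nu+1}$ (uniqueness for the Bessel ODE) rather than from the interlacing theorem, so it needs no external citation.
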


We define the following radial integrals. For each nonnegative integer $k$, set
\begin{align}
 M_k&=\int_0^1Fr^{n-1+k}\dd,&
 L_k&=\int_0^1F^2r^{n-1+k}\dd,\label{eq:FL-integrals}\\
 H_{1,k}&=\int_0^1Ur^{n-1+k}\dd,&
 H_{2,k}&=\int_0^1U^2r^{n-1+k}\dd,\label{eq:H-integrals}\\
 I_k&=\int_0^1(U')^2r^{n-1+k}\dd.&&\label{eq:I-integrals}
\end{align}
Since $U=1-F$, one has
\begin{equation}\label{eq:H-from-F}
 H_{1,k}=\frac1{n+k}-M_k,
 \qquad H_{2,k}=\frac1{n+k}-2M_k+L_k.
\end{equation}

We will use the exact values of these integrals for $k=0,2,4$ in later calculations. 
The following lemma gives these values.

\begin{lemma}\label[lemma]{lem:radial-integrals}
The radial integrals satisfy
\begin{equation}\label{eq:radial-integrals-k0}
 M_0=0,\quad L_0=\frac12,\quad
 H_{1,0}=\frac1n,\quad H_{2,0}=\frac{n+2}{2n},\quad I_0=\frac{\lambda_1}2.
\end{equation}
For $k\geq2$,
\begin{align}
 M_k&=\frac{k}{\lambda_1}
       -\frac{k(n+k-2)}{\lambda_1}M_{k-2},\label{eq:M-rec}\\
 I_k&=\lambda_1 L_k-\frac k2+
             \frac{k(n+k-2)}2L_{k-2},\label{eq:I-rec}\\
 L_k&=\frac{\lambda_1-\frac k2(n-2-k)
      +\frac k2(n-2-k)(n+k-2)L_{k-2}}
      {2\lambda_1(k+1)}.\label{eq:L-rec}
\end{align}
In particular,
\begin{align}
 M_2&=\frac2{\lambda_1},\notag\\
 M_4&=\frac4{\lambda_1}-\frac{8(n+2)}{\lambda_1^2},\label{eq:M24}\\
 L_2&=\frac16+\frac{(n-2)(n-4)}{12\lambda_1},\notag\\
 L_4&=\frac1{10}+\frac{(n-6)(n-4)}{30\lambda_1}
 +\frac{(n-6)(n-4)(n-2)(n+2)}{60\lambda_1^2},
 \label{eq:L24}\\
 H_{1,2}&=\frac1{n+2}-\frac2{\lambda_1},\notag\\
 H_{2,2}&=\frac1{n+2}-\frac4{\lambda_1}
 +\frac16+\frac{(n-2)(n-4)}{12\lambda_1},\label{eq:H2}\\
 H_{1,4}&=\frac1{n+4}-\frac4{\lambda_1}+\frac{8(n+2)}{\lambda_1^2},\notag\\
 H_{2,4}&=\frac1{n+4}-\frac8{\lambda_1}+\frac{16(n+2)}{\lambda_1^2}
 +\frac1{10}+\frac{(n-6)(n-4)}{30\lambda_1}\notag\\
 &\qquad{}+\frac{(n-6)(n-4)(n-2)(n+2)}{60\lambda_1^2},
 \label{eq:H4-expanded}\\
 I_2&=\frac{\lambda_1}6+\frac{n^2-4}{12},\notag\\
 I_4&=\frac{\lambda_1}{10}+\frac{n^2-16}{30}
 +\frac{(n^2-4)(n^2-16)}{60\lambda_1}.\label{eq:I24}
\end{align}
\end{lemma}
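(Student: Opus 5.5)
The plan is to reduce everything to the ODE \eqref{eq:profile-ode} for $F$, namely $(r^{n-1}F')'=-\lambda_1 r^{n-1}F$ with $F(1)=1$ and $F'(1)=0$, and to integrate by parts against powers of $r$. The boundary terms at $r=1$ simplify because $F'(1)=0$ and $F(1)=1$. The boundary terms at $r=0$ vanish because $F$ is smooth and even near $0$: it is a power series in $r^2$ by \eqref{eq:bessel-series}. The values $H_{1,k}$, $H_{2,k}$ then follow immediately from \eqref{eq:H-from-F}, so the real work is for $M_k$, $L_k$, $I_k$.

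\emph{Step 1 ($M_k$).} Multiply the ODE by $r^k$ and integrate over $[0,1]$. This gives $\lambda_1 M_k=-\int_0^1 r^k (r^{n-1}F')'\,dr$. Integrating by parts twice, we get
\[
\lambda_1 M_k=-\bigl[r^{n-1+k}F'\bigr]_0^1+k\bigl[r^{n+k-2}F\bigr]_0^1-k(n+k-2)M_{k-2}=k-k(n+k-2)M_{k-2}.
\]
For $k=0$ only the first boundary term appears, so $M_0=0$. Then $M_2$ and $M_4$ follow by iterating.

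\emph{Step 2 ($L_k$ and $I_k$).} Multiply the ODE by $r^kF$ and integrate by parts once:
\[
-\lambda_1L_k=\bigl[r^{n-1+k}FF'\bigr]_0^1-\int_0^1 r^{n-1+k}(F')^2\,dr-k\int_0^1 r^{n+k-2}FF'\,dr.
\]
The boundary term is zero. Writing $FF'=\tfrac12(F^2)'$, the last integral equals $\tfrac12\bigl(1-(n+k-2)L_{k-2}\bigr)$. Since $U'=-F'$, this yields \eqref{eq:I-rec}.

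To get a second relation, I will use a Pohozaev/Rellich-type identity: multiply the ODE by $r^{k+1}F'$. The term $\lambda_1 r^{n+k}FF'$ integrates to $\tfrac{\lambda_1}{2}\bigl(1-(n+k)L_k\bigr)$. The term $r^{k+1}F'(r^{n-1}F')'$ is handled by writing it as $r^{k+2-n}\cdot\tfrac12\bigl((r^{n-1}F')^2\bigr)'$. The boundary term vanishes because $F'(1)=0$, and the result is an integral of $(F')^2$ with weight $r^{n-1+k}$. This expresses $\lambda_1 L_k$ in terms of $I_k$ and constants, with coefficients linear in $k$ and $n$.

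Eliminating $I_k$ between the two relations should give \eqref{eq:L-rec}, with denominator $2\lambda_1(k+1)$. For $k=0$ the two relations give $I_0=\lambda_1L_0$ and $\tfrac{\lambda_1}{2}=\lambda_1L_0$, hence $L_0=\tfrac12$ and $I_0=\tfrac{\lambda_1}2$. The normalization $F(1)=1$ is what makes $L_0=\tfrac12$ come out exactly.

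\emph{Step 3 (explicit values).} Iterate the recursions for $k=2,4$ and substitute into \eqref{eq:H-from-F}. This is routine algebra; for example, $I_2=\lambda_1L_2-1+nL_0$.

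The main obstacle is Step 2's Pohozaev identity: getting the signs and coefficients to combine exactly into \eqref{eq:L-rec}. I would check the combined identity against $k=0$, and against $n=2$, where the ODE reduces to a Bessel $J_0$ equation, before trusting the general formula. I also need to verify carefully that the $r=0$ boundary terms vanish for $k\ge0$ when $n=2$, using evenness of $F$.
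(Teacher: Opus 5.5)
Your proposal is correct and follows essentially the same route as the paper. The $M_k$ recursion comes from integrating $r^k(r^{n-1}F')'=-\lambda_1 r^{n-1+k}F$ by parts twice, and the $I_k$ relation from testing against $r^kF$. Your multiplier $r^{k+1}F'$ produces exactly the paper's Pohozaev identity $\lambda_1\bigl((n+k)L_k-1\bigr)=(n-2-k)I_k$; eliminating $I_k$ then gives \eqref{eq:L-rec}, and the case $k=0$ gives $L_0=\tfrac12$. Your concern about $r=0$ (including $n=2$) is harmless: every boundary term there either carries a positive power of $r$ or is multiplied by $k$.
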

The proofs of \cref{lem:profile,lem:radial-integrals} are given in
Appendices~\ref{app:bessel} and~\ref{app:radial-integrals}, respectively.

Here and below, $\omega_{n-1}=|\mathbb S^{n-1}|=2\pi^{n/2}/\Gamma(n/2)$ denotes the area of the Euclidean unit sphere.

Notice that the radial integrals in \eqref{eq:radial-integrals-k0} give the 
optimal constant in the Euclidean Nash inequality:
\begin{equation}\label{eq:sharp-constant}
 \CN(\R^n)=
 \frac{(\omega_{n-1}H_{1,0})^{4/n}\omega_{n-1}I_0}
 {(\omega_{n-1}H_{2,0})^{1+2/n}}
 =\frac{2^{2/n}\lambda_1\omega_{n-1}^{2/n}n^{1-2/n}}
 {(n+2)^{1+2/n}}.
\end{equation}

\section{Expansion formulas for the Nash quotient }\label{sec:local-expansions}

\subsection{Vanishing of the scalar curvature}\label{sec:geometry}

By evaluating the Nash quotient on rescaled Euclidean
extremal functions concentrating at a point, Druet,
Hebey, and Vaugon \cite{DruetHebeyVaugon1999}
computed the first nontrivial term in its asymptotic
expansion with respect to the concentration scale
and obtained the following curvature restriction.

\begin{proposition}\label[proposition]{prop:first-order}[Theorem 1.3 in \cite{DruetHebeyVaugon1999}]
Let $(M^n,g)$ be a smooth Riemannian manifold without boundary, with $n\ge2$, and let $V\Subset M$ be a nonempty open set.
If $\CN(V,g)\ge\CN(\R^n)$, then $S\le0$ on $V$.
\end{proposition}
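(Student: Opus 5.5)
The plan is to test the Nash quotient on the concentrating family $U_{p,t}$ of rescaled Euclidean extremals centred at a point $p\in V$, and to read off the sign of $S(p)$ from the first curvature correction in its expansion. Fix $p\in V$ and normal coordinates $x$ at $p$, defined on a ball $B_\delta(p)\Subset V$. For small $t$, set
\[
U_{p,t}(x)=t^{-n/4}U\bigl(|x|/\sqrt t\bigr),
\]
which is supported in $B_{\sqrt t}(p)\subset V$ and lies in $H_0^1(V)$ by \cref{lem:profile}. The hypothesis $\CN(V,g)\ge\CN(\R^n)$ gives $\QN^g(U_{p,t})\ge \CN(\R^n)$ for all small $t$. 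The goal is the expansion
\[
\frac{\QN^g(U_{p,t})}{\CN(\R^n)}=1-c_n S(p)\,t+o(t)
\]
with a constant $c_n>0$. Given this, letting $t\to0$ forces $S(p)\le0$, and since $p$ is arbitrary, $S\le0$ on $V$.

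To compute the expansion, I would use the normal-coordinate expansions
\[
\sqrt{\det g}=1-\tfrac16\Ric_{ij}(p)x^ix^j+O(|x|^3),\qquad g^{ij}=\delta^{ij}+\tfrac13 R_{ikjl}(p)x^kx^l+O(|x|^3).
\]
Since the test function is radial, $|\nabla u|_g^2=(\partial_r u)^2$ exactly, by the Gauss lemma. After substituting $x=\sqrt t\,y$, each of the three integrals becomes its Euclidean value plus $t$ times a quadratic moment. Angular averaging replaces $\Ric_{ij}y^iy^j$ by $S(p)|y|^2/n$, and odd cubic terms vanish by symmetry. The three integrals therefore become
\[
\begin{aligned}
\|u\|_1&=\omega_{n-1}t^{n/4}\bigl(H_{1,0}-\tfrac{S}{6n}H_{1,2}t\bigr),\\
\|u\|_2^2&=\omega_{n-1}\bigl(H_{2,0}-\tfrac{S}{6n}H_{2,2}t\bigr),\\
\|\nabla u\|_2^2&=\omega_{n-1}t^{-1}\bigl(I_0-\tfrac{S}{6n}I_2t\bigr),
\end{aligned}
\]
each up to $O(t^2)$ relative corrections. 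The powers of $t$ cancel in the quotient, as they must by scale invariance.

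Taking logarithms, the first-order coefficient of $\QN^g(U_{p,t})/\CN(\R^n)$ is
\[
-\frac{S(p)}{6n}\Bigl(\frac4n\frac{H_{1,2}}{H_{1,0}}+\frac{I_2}{I_0}-\frac{n+2}{n}\frac{H_{2,2}}{H_{2,0}}\Bigr)t .
\]
The main obstacle is showing that the bracket is strictly positive. I would evaluate it explicitly using \cref{lem:radial-integrals}, namely $H_{1,0}=1/n$, $H_{2,0}=(n+2)/(2n)$ and $I_0=\lambda_1/2$, together with the stated values of $H_{1,2}$, $H_{2,2}$ and $I_2$. This reduces the bracket to an explicit rational expression in $n$ and $\lambda_1$, whose positivity I expect to follow from a lower bound such as $\lambda_1=j_{n/2,1}^2>(n/2)(n/2+2)$, a standard bound on Bessel zeros. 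If that algebra turns out delicate, the fallback is simply to cite Theorem~1.3 of \cite{DruetHebeyVaugon1999}, which is precisely this computation.

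Finally, the error terms need control. Since $U$ is bounded and supported in the unit ball, all remainders are uniformly $O(t^2)$ relative to the main terms, so the $o(t)$ claim is justified and the argument closes.
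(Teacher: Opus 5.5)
Your proposal is correct and is exactly the argument the paper attributes to Druet--Hebey--Vaugon: expand $\QN^g(U_{p,t})$ to first order in $t$ and read off the sign of $S(p)$ (the paper itself gives no proof here and only cites their Theorem~1.3). The step you flagged as the main obstacle is immediate: inserting the values from \cref{lem:radial-integrals}, your bracket equals $4H_{1,2}+\frac{2I_2}{\lambda_1}-2H_{2,2}=\frac{2}{n+2}+\frac{n-2}{\lambda_1}>0$ for every $n\ge2$, so neither a Bessel-zero bound nor the fallback citation is needed.
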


As a direct corollary, we obtain the following result.

\begin{corollary}\label[corollary]{prop:scalar-reduction}
If $\CN(V,g)\ge\CN(\R^n)$ and $\int_VS\,d\mu_g\ge0$, then $S\equiv0$ on $V$.
\end{corollary}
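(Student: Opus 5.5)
The corollary follows by combining \cref{prop:first-order} with continuity of $S$ and positivity of the volume measure. The argument is a standard "nonpositive function with nonnegative integral vanishes" step.

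First, since $\CN(V,g)\ge\CN(\R^n)$, \cref{prop:first-order} gives $S\le0$ at every point of $V$. Next, $V\Subset M$ has compact closure and $g$ is smooth, so $S$ is continuous and bounded on $\overline V$. Hence $S$ is integrable over $V$ and $\int_V S\,d\mu_g$ is a well-defined real number. Together with the hypothesis, we get
\[
0\le\int_V S\,d\mu_g=-\int_V |S|\,d\mu_g\le 0,
\]
so $\int_V|S|\,d\mu_g=0$.

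It remains to pass from vanishing integral to pointwise vanishing. Suppose $S(x_0)<0$ for some $x_0\in V$. By continuity there is a small open geodesic ball $B\subset V$ around $x_0$ on which $S\le S(x_0)/2<0$. Since $V$ is open and $d\mu_g$ has a smooth positive density in local coordinates, $\mu_g(B)>0$. Then
\[
\int_V|S|\,d\mu_g\ge \tfrac{|S(x_0)|}{2}\,\mu_g(B)>0,
\]
a contradiction. Therefore $S\equiv0$ on $V$.

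No step is a genuine obstacle: all of the analytic content sits in \cref{prop:first-order}, which we take as given.
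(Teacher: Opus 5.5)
Your proposal is correct and follows the paper's own argument: \cref{prop:first-order} gives $S\le0$ on $V$, and a continuous nonpositive function with nonnegative integral over an open set must vanish identically. The paper states this in one line, and you have just filled in the continuity and positive-measure details.
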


\subsection{The order $t^2$ expansion of the modified Nash quotient}\label{sec:perturbation}
Choose $t_p>0$ so that $B_g(p,\sqrt{t_p})\Subset V$ and
$\sqrt{t_p}<\operatorname{inj}_g(p)$. For $0<t<t_p$, define
$U_{p,t}$ in normal coordinates $\{x_i\}$ centered at $p$ by
\begin{equation}\label{eq:localized-U}
 U_{p,t}(x)=
 \begin{cases}
  t^{-n/4}U\!\left(\dfrac{|x|}{\sqrt t}\right),
  & |x|<\sqrt t,\\[6pt]
  0, & \text{otherwise},
 \end{cases}
\end{equation}
where $U$ is the function defined in \eqref{eq:profile}. Note that $U_{p,t}(x)$ are the extremal functions of Nash quotient on $\mathbb{R}^n$. 

As mentioned in Section~\ref{sec:proof_strategy},
since the expansion of $\QN^g(U_{p,t})$ does not by itself imply
the desired rigidity result, we need to consider the following
modified version. For the
normal coordinates $\{x_i\}$ centered at $p$, define
\begin{equation}\label{eq:perturbed-test}
 w_{t,f,A}(x)=
 \begin{cases}
  t^{-n/4}\left[
   U\!\left(\frac{|x|}{\sqrt t}\right)
   +tf\!\left(\frac{|x|}{\sqrt t}\right)
    z_A\!\left(\frac{x}{|x|}\right)\right],
  & |x|<\sqrt t,\\[6pt]
  0, & \text{otherwise},
 \end{cases}
\end{equation}
where
 $f\in C^\infty([0,1])$ satisfies
\begin{equation}\label{eq:amplitude-class}
 f(r)=r^2\widehat f(r^2)\ \text{near }0,
 \qquad f=0\ \text{on }[1-\delta_f,1],
\end{equation}
for some smooth $\widehat f\in C^\infty([0,1])$ and some $\delta_f>0$, and let $A\in\operatorname{Sym}^2(T_p^*M)$ be a trace-free symmetric tensor, where
\[
z_A(\theta)=A_{ij}\theta^i\theta^j \text{ for } \theta=\frac{x}{|x|}.
\]
Here, $f$ and $A$ will be determined later to achieve our goal.

We derive the expansion of $\QN^g(w_{t,f,A})$ in the following proposition.

\begin{proposition}\label[proposition]{prop:corrected}
Let $f\in C^\infty([0,1])$ satisfy \eqref{eq:amplitude-class}, and
let $A\in\operatorname{Sym}^2(T_p^*M)$ be a trace-free symmetric tensor.
Suppose that $S\equiv 0$ on $V$.
For fixed $p,f,A$, the following expansion holds as $t\downarrow0$
\begin{equation}\label{eq:corrected-quotient}
 \frac{\QN^g(w_{t,f,A})}{\CN(\R^n)}
 =1-t^2\bigl(G_n|\Ric|^2+B_n|W|^2
       +L_n[f]\langle A,\Ric\rangle+Q_n[f]|A|^2\bigr)+o(t^2),
\end{equation}
where $W$ denotes the Weyl curvature tensor. Here,

\begin{equation}\label{eq:G-B}
 G_n:=\frac{(7-2n)P(n,\lambda_1)}
 {270(n-2)\lambda_1^2n(n+2)(n+4)},\qquad
 B_n:=\frac{P(n,\lambda_1)}{360\lambda_1^2n(n+2)(n+4)}.
\end{equation}
\begin{align}
 Q_n[f]&:=-\frac{2\rho_n}{\lambda_1}
 \int_0^1\left((f')^2+\frac{2n}{r^2}f^2-\lambda_1 f^2\right)
 r^{n-1}\dd,
                         \label{eq:Q-form}\\
 L_n[f]&:=\frac{4\rho_n}{3\lambda_1}\int_0^1F'fr^n\dd.
                         \label{eq:L-form}
\end{align}
Put
\begin{equation}\label{eq:P}
 P(n,\lambda_1):=6\lambda_1^2+2(n^2-16)\lambda_1+(n^2-4)(n^2-16),
\end{equation} and $\rho_n:=2/[n(n+2)]$.
Moreover, $B_n>0$ for every $n\ge3$.
\end{proposition}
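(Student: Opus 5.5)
The plan is to compute the three integrals in $\QN^g(w_{t,f,A})$ separately in normal coordinates at $p$, rescaled by $y=x/\sqrt t$. For each we keep terms through relative order $t^2$, then combine them through the logarithm of the quotient. In normal coordinates we use the classical expansion
\[
\sqrt{\det g}=1-\tfrac16\Ric_{ij}x^ix^j-\tfrac1{12}\nabla_k\Ric_{ij}x^ix^jx^k+\Bigl(\tfrac1{72}(\Ric_{ij}x^ix^j)^2-\tfrac1{40}\nabla_{kl}\Ric_{ij}x^ix^jx^kx^l-\tfrac1{180}R_{ikjm}R_{kplq}x^ix^jx^px^q\Bigr)+O(|x|^5).
\]
Since $U$ and $f$ are radial and the Gauss lemma gives $g^{ij}x_j=x^i$, we have $|\nabla u|_g^2=(\partial_r u)^2+|\nabla_{\mathrm{tan}}u|_g^2$. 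The tangential part comes only from $z_A$ and is governed by $g^{ij}$ restricted to directions orthogonal to $x$. There it equals $\delta^{ij}+\tfrac13R_{ikjl}x^kx^l+O(|x|^3)$, and we need it only to leading order, because the $f$-term already carries a factor $t$.

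After substituting $x=\sqrt t\,y$, odd-degree monomials in $\theta$ integrate to zero. The $\nabla\Ric$ terms therefore drop out, and the $\nabla^2\Ric$ terms average to multiples of $\Delta S$ and $\nabla^2_{ij}S$. These vanish because $S\equiv0$ on $V$, and the same hypothesis removes every first-order term $t\,S(p)$. For the spherical averages we use
\[
\int_{\mathbb S^{n-1}}\theta^i\theta^j\theta^k\theta^l=\frac{\omega_{n-1}}{n(n+2)}(\delta\delta+\delta\delta+\delta\delta).
\]
This produces the contractions $|\Ric|^2$, $|\Rm|^2$ and $\langle A,\Ric\rangle$. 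The $f$-contributions at order $t^2$ are of three kinds. The first is the cross term $2tUfz_A$ multiplied by $-\tfrac16 t\Ric_{ij}y^iy^j$ in $\int u^2$ and in $\int|\nabla u|^2$. The second is the quadratic term $t^2f^2z_A^2$ together with $t^2\bigl((f')^2+f^2|\nabla_\theta z_A|^2/r^2\bigr)$. For the quadratic term we use $\int z_A^2=2\rho_n\omega_{n-1}|A|^2\cdot\tfrac12\cdots$, and $-\Delta_{\mathbb S}z_A=2n\,z_A$, which accounts for the $2n/r^2$ in $Q_n$. The third kind is the linear term $\int Ufz_A$ at order $t$, which vanishes because $A$ is trace-free. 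That is also why $\int|u|$, with $u>0$ for small $t$, picks up no linear $f$-term. The cross term in the Dirichlet integral, $2\int U'f'r^{n-1}\cdot\int z_A\cdot(\dots)$, likewise vanishes at order $t$, and at order $t^2$ it pairs with the Ricci correction.

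With the pieces in hand, we write $\log\QN=\tfrac4n\log\|u\|_1+\log\|\nabla u\|_2^2-(1+\tfrac2n)\log\|u\|_2^2$ and expand. We use the Euler--Lagrange relation for $U$ on $B_1$, namely $-\Delta U=\lambda_1(U-c)$, which is exactly \eqref{eq:profile-ode}. This relation kills the terms linear in perturbations that are not curvature-weighted, since $U$ is critical for the Euclidean quotient. The cross terms in $f$ then reduce, via integration by parts with $U'=-F'$, to the single integral $\int_0^1F'fr^n\,dr$, which gives $L_n$. The $f^2$ terms assemble into $Q_n$ with prefactor $2\rho_n/\lambda_1$, coming from the normalization $I_0=\lambda_1/2$. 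The pure curvature terms become combinations of the radial integrals $M_k,L_k,H_{j,k},I_k$ for $k=2,4$. After substituting the values from \cref{lem:radial-integrals} and decomposing $|\Rm|^2=|W|^2+\tfrac{4}{n-2}|\Ric|^2$ (valid when $S=0$), this yields $G_n$ and $B_n$. Positivity of $B_n$ then follows from $P(n,\lambda_1)>0$ for $n\ge4$, where every term is nonnegative. For $n=3$ we have $W=0$ anyway, but we also check $6\lambda_1^2-14\lambda_1-35>0$, using $\lambda_1=j_{3/2,1}^2\approx20.19$.

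The main obstacle is the bookkeeping of the $t^2$ pure-curvature coefficient. It requires the fourth-order volume expansion, the quadratic Ricci term squared, and the second-order cross terms from the logarithm, including the product of order-$t$ Ricci corrections. Each of these must be shown to collapse, using \eqref{eq:M-rec}–\eqref{eq:L-rec}, to the single polynomial $P(n,\lambda_1)$. I would first organize everything as $\E=\E_0(1+a_1t+a_2t^2)$ for each of the three integrals. I would then verify the $|W|^2$ coefficient alone as a sanity check, since it comes only from the $R_{ikjm}R_{kplq}$ volume term, before assembling $G_n$.
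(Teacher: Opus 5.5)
Your route is the paper's: expand $\|w_{t,f,A}\|_2^2$, $\|w_{t,f,A}\|_1$ and $\|\nabla w_{t,f,A}\|_2^2$ separately in rescaled normal coordinates to order $t^2$, then combine them. After normalizing by $\omega_{n-1}$ and using $H_{2,0}=\frac{n+2}{2n}$, $H_{1,0}=\frac1n$, $I_0=\frac{\lambda_1}{2}$, the three $t^2$ coefficients enter with weights $-2$, $4$ and $2/\lambda_1$. The pure curvature part then reduces to $P(n,\lambda_1)\,\E(v)$ through $L_4$ and $I_4$. However, your accounting of the terms linear in $f$ contains a concrete error. You assert that $\int|u|$ picks up no linear $f$-term. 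That is true only at order $t$. Once $w_{t,f,A}>0$, the $L^1$ norm is linear in $w_{t,f,A}$, and its $f$-dependence is $t\int f z_A J_t$. The $-\frac16 t\Ric_{ij}y^iy^j$ part of $J_t$ then gives a nonzero order-$t^2$ contribution $-\frac{\rho_n}{6}\langle A,\Ric\rangle\int_0^1 f r^{n+1}\,dr$ to the normalized $t^2$ coefficient of $\|w_{t,f,A}\|_1$, which enters the quotient with weight $4$. Your list of order-$t^2$ $f$-contributions contains only the cross terms in $\int u^2$ and $\int|\nabla u|^2$, so it omits this one.

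The omitted term is essential. With only the two cross terms you list, the coefficient of $\langle A,\Ric\rangle$ inside $1-t^2[\cdots]$ is $\rho_n\bigl(-\frac23\int_0^1 Uf r^{n+1}\,dr+\frac{2}{3\lambda_1}\int_0^1 U'f' r^{n+1}\,dr\bigr)$. Integrating the second integral by parts, using $U'=-F'$ and $(r^{n-1}F')'=-\lambda_1 r^{n-1}F$, gives $L_n[f]-\frac{2\rho_n}{3}\int_0^1 f r^{n+1}\,dr$ rather than $L_n[f]$. So your claimed collapse to the single integral $\int_0^1F'fr^n\,dr$ fails.

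The $L^1$ term $+\frac{2\rho_n}{3}\int_0^1 f r^{n+1}\,dr$ is what converts $-\frac23\int_0^1 Ufr^{n+1}\,dr$ into $\frac23\int_0^1 Ffr^{n+1}\,dr$. That in turn cancels the $F$-term produced by the integration by parts.

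The same point appears in your own criticality language. Put $\psi(y)=-\frac16\Ric_{ij}y^iy^j$. The three cross terms together equal
$\frac{1}{\omega_{n-1}}\int_{B_1}\bigl(-4U\phi+4\phi+\frac{4}{\lambda_1}\nabla U\cdot\nabla\phi\bigr)\psi\,dy$ with $\phi=fz_A$. The equation $-\Delta U=\lambda_1(U-1)$ reduces this to $-\frac{4}{\lambda_1\omega_{n-1}}\int_{B_1}\phi\,\nabla\psi\cdot\nabla U\,dy=-L_n[f]\langle A,\Ric\rangle$. That reduction works only because the middle term $4\phi\psi$, which comes from $\|w_{t,f,A}\|_1$, is present.

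Smaller points:
\begin{enumerate}
\item No products of order-$t$ corrections survive in the logarithmic expansion. After spherical integration all three order-$t$ coefficients vanish, since $S(p)=0$ and $\int_{\mathbb S^{n-1}}z_A=0$.
\item The correct normalization is $\int_{\mathbb S^{n-1}}z_A^2=\rho_n\omega_{n-1}|A|^2$.
\item For $n=3$ you need a rigorous bound such as $j_{3/2,1}>\pi$, which is what the paper uses, rather than a decimal value.
\end{enumerate}
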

\begin{proof}
Put $J(x)=\sqrt{\det(g_{ij}(x))}$ and write
\[
 \begin{gathered}
 \Phi(y)=f(r)z_A(\theta)=f(r)A_{ij}\theta^i\theta^j,
 \qquad J_t(y)=J(\sqrt t\,y),\\
 r=|y|,\quad \theta=\frac{y}{|y|}\quad(y\ne0).
 \end{gathered}
\]
In the calculations below, we will use the normal-coordinate volume expansion
\begin{equation}\label{eq:volume-expansion}
 \begin{split}
 J(x)={}&1-\frac16\Ric_{ij}x^ix^j
       -\frac1{12}\nabla_k\Ric_{ij}x^ix^jx^k\\
       &+v_{ijkl}x^ix^jx^kx^l+O(|x|^5).
 \end{split}
\end{equation}
Here $v=(v_{ijkl})$ is the fourth-order coefficient tensor.
All curvature tensors and their covariant derivatives are evaluated
at $p$; see \cite[Lemma~3.4, p.~210]{SchoenYau}
and \cite{ChengGN}.
For a fourth-order tensor $T$, define
\begin{equation}\label{eq:fourth-order-contraction}
 \E(T)=\sum_{i,j}(T_{iijj}+T_{ijij}+T_{ijji}).
\end{equation}
Since $S=0$, spherical integration gives
\begin{equation}\label{eq:volume-average-second-order}
 \frac1{\omega_{n-1}}\int_{\mathbb S^{n-1}}J(\sqrt t\,r\theta)\,d\theta
 =1+\frac{\E(v)}{n(n+2)}t^2r^4+O(t^{5/2}r^5),
 \qquad 0\leq r\leq1.
\end{equation}
The quadratic term vanishes after spherical integration since $S=0$,
and the cubic term vanishes by symmetry. The fourth-order term uses
$\frac1{\omega_{n-1}}\int_{\mathbb S^{n-1}}
v_{ijkl}\theta^i\theta^j\theta^k\theta^l\,d\theta
=\E(v)/[n(n+2)]$;
see \cite[Eqs.~(2.9)--(2.12)]{ChengLS}.

First, we compute
\begin{align}
 a(t)
 &:=\frac{\|w_{t,f,A}\|_2^2}{\omega_{n-1}}\notag\\
 &=\frac1{\omega_{n-1}}\int_{|x|<\sqrt t}
       w_{t,f,A}(x)^2\sqrt{\det g(x)}\,dx\notag\\
 &\overset{x=\sqrt t\,y}{=}
   \frac{t^{-n/2}t^{n/2}}{\omega_{n-1}}
       \int_{B_1}(U+t\Phi)^2J_t\,dy\notag\\
 &\overset{y=r\theta}{=}
   \frac1{\omega_{n-1}}\int_0^1\biggl[
       U^2\int_{\mathbb S^{n-1}}J_t\,d\theta
       +2tUf\int_{\mathbb S^{n-1}}A_{ij}\theta^i\theta^jJ_t\,d\theta\notag\\
 &\hspace{30mm}
       +t^2f^2\int_{\mathbb S^{n-1}}
          (A_{ij}\theta^i\theta^j)^2J_t\,d\theta
       \biggr]r^{n-1}\dd\notag\\
 &=\int_0^1U^2r^{n-1}\dd\notag\\
 &\quad+t^2\int_0^1
       \biggl[\frac{\E(v)}{n(n+2)}r^4U^2
       +\rho_n|A|^2f^2
       -\frac{\rho_n}{3}r^2\langle A,\Ric\rangle Uf
       \biggr]r^{n-1}\dd+o(t^2)\notag\\
 &=H_{2,0}+t^2\biggl\{
       \frac{\E(v)}{n(n+2)}H_{2,4}
       +\rho_n|A|^2\int_0^1f^2r^{n-1}\dd\notag\\
 &\hspace{35mm}
       -\frac{\rho_n}{3}\langle A,\Ric\rangle
           \int_0^1Ufr^{n+1}\dd
       \biggr\}+o(t^2).
 \label{eq:corrected-a}
\end{align}
Here we used the following identities for trace-free symmetric tensors $A$ and $B$:
\[
 \begin{aligned}
 \frac1{\omega_{n-1}}\int_{\mathbb S^{n-1}}
       A_{ij}\theta^i\theta^j\,d\theta&=0,\\
 \frac1{\omega_{n-1}}\int_{\mathbb S^{n-1}}
       (A_{ij}\theta^i\theta^j)(B_{kl}\theta^k\theta^l)\,d\theta
       &=\rho_n\langle A,B\rangle.
 \end{aligned}
\] These identities follow from rotational symmetry.
Taking $B=\Ric$, together with
\eqref{eq:volume-expansion}, \eqref{eq:volume-average-second-order},
and $S=0$, we obtain the spherical integrals in \eqref{eq:corrected-a}.
The last equality uses the definitions of $H_{2,0}$ and $H_{2,4}$
in \eqref{eq:H-integrals}.

Next, we compute the $L^1$ integral. Since $f$ vanishes near $r=1$
and $U>0$ on $[0,1)$, we have $U+tfA_{ij}\theta^i\theta^j\geq0$ for fixed $f,A$
and all sufficiently small $t>0$. Thus the absolute value can be omitted,
and we obtain
\begin{align}
 b(t)
 &:=\frac{t^{-n/4}\|w_{t,f,A}\|_1}{\omega_{n-1}}\notag\\
 &=\frac{t^{-n/4}}{\omega_{n-1}}\int_{|x|<\sqrt t}
       w_{t,f,A}(x)\sqrt{\det g(x)}\,dx\notag\\
 &\overset{x=\sqrt t\,y}{=}
   \frac{t^{-n/4}t^{-n/4}t^{n/2}}{\omega_{n-1}}
       \int_{B_1}(U+t\Phi)J_t\,dy\notag\\
 &\overset{y=r\theta}{=}
   \frac1{\omega_{n-1}}\int_0^1\left[
       U\int_{\mathbb S^{n-1}}J_t\,d\theta
       +tf\int_{\mathbb S^{n-1}}A_{ij}\theta^i\theta^jJ_t\,d\theta
       \right]r^{n-1}\dd\notag\\
 &=\int_0^1Ur^{n-1}\dd
       +t^2\int_0^1\left[
          \frac{\E(v)}{n(n+2)}r^4U
          -\frac{\rho_n}{6}r^2\langle A,\Ric\rangle f
          \right]r^{n-1}\dd+o(t^2)\notag\\
 &=H_{1,0}+t^2\left\{
       \frac{\E(v)}{n(n+2)}H_{1,4}
       -\frac{\rho_n}{6}\langle A,\Ric\rangle
           \int_0^1fr^{n+1}\dd
       \right\}+o(t^2),
 \label{eq:corrected-b}
\end{align}
where the spherical integrations again use
\eqref{eq:volume-expansion} and rotational symmetry.
The last equality substitutes the definitions of $H_{1,0}$
and $H_{1,4}$ from \eqref{eq:H-integrals}.

Finally, we compute the gradient energy
\begin{align}
 d(t)
 &:=\frac{t\|\nabla w_{t,f,A}\|_2^2}{\omega_{n-1}}\notag\\
 &\overset{x=\sqrt t\,y}{=}
   \frac1{\omega_{n-1}}\int_{B_1}
       \biggl[(U')^2+2tU'f'A_{ij}\theta^i\theta^j\notag\\
 &\hspace{22mm}
       +t^2\biggl((f')^2(A_{ij}\theta^i\theta^j)^2
                  +\frac{f^2}{r^2}
                     |\nabla_S(A_{ij}\theta^i\theta^j)|^2\biggr)
       \biggr]J_t\,dy+O(t^3)\notag\\
 &\overset{y=r\theta}{=}
   \frac1{\omega_{n-1}}\int_0^1\biggl[
       (U')^2\int_{\mathbb S^{n-1}}J_t\,d\theta
       +2tU'f'\int_{\mathbb S^{n-1}}
          A_{ij}\theta^i\theta^jJ_t\,d\theta\notag\\
 &\hspace{24mm}
       +t^2(f')^2\int_{\mathbb S^{n-1}}
          (A_{ij}\theta^i\theta^j)^2\,d\theta\notag\\
 &\hspace{24mm}
       +\frac{t^2f^2}{r^2}\int_{\mathbb S^{n-1}}
          |\nabla_S(A_{ij}\theta^i\theta^j)|^2\,d\theta
       \biggr]r^{n-1}\dd+O(t^3)\notag\\
 &=\int_0^1(U')^2r^{n-1}\dd\notag\\
 &\quad+t^2\int_0^1\biggl[
       \frac{\E(v)}{n(n+2)}r^4(U')^2
       -\frac{\rho_n}{3}r^2\langle A,\Ric\rangle U'f'
       \biggr]r^{n-1}\dd\notag\\
 &\quad+t^2\rho_n|A|^2\int_0^1
       \left((f')^2+\frac{2n}{r^2}f^2\right)r^{n-1}\dd
       +o(t^2)\notag\\
 &=I_0+t^2\biggl\{
       \frac{\E(v)}{n(n+2)}I_4
       +\rho_n|A|^2\int_0^1
          \left((f')^2+\frac{2n}{r^2}f^2\right)r^{n-1}\dd
       \notag\\
 &\hspace{25mm}
       -\frac{\rho_n}{3}\langle A,\Ric\rangle
           \int_0^1U'f'r^{n+1}\dd
       \biggr\}+o(t^2).
 \label{eq:corrected-d}
\end{align}
Here we used the following identity to evaluate the angular
gradient term in \eqref{eq:corrected-d}:
\[
 \begin{aligned}
 \frac1{\omega_{n-1}}\int_{\mathbb S^{n-1}}
       |\nabla_S(A_{ij}\theta^i\theta^j)|^2\,d\theta
 &=\frac{2n}{\omega_{n-1}}\int_{\mathbb S^{n-1}}
       (A_{ij}\theta^i\theta^j)^2\,d\theta\\
 &=2n\rho_n|A|^2.
 \end{aligned}
\]
The first equality follows from integration by parts on
$\mathbb S^{n-1}$ and
$-\Delta_S(A_{ij}\theta^i\theta^j)=2nA_{ij}\theta^i\theta^j$;
the second follows
from rotational symmetry.
The remaining spherical integrations use
\eqref{eq:volume-expansion} and rotational symmetry.
The last equality in \eqref{eq:corrected-d} uses
the definitions of $I_0$ and $I_4$
from \eqref{eq:I-integrals}.

Let $a_2,b_2,d_2$ denote the coefficients of $t^2$ in the expansions
of $a(t),b(t),d(t)$, respectively.
The normalized Nash quotient then satisfies
\begin{align*}
 &\quad\frac{\QN^g(w_{t,f,A})}{\CN(\R^n)}
 =\frac{\|w_{t,f,A}\|_1^{4/n}\|\nabla w_{t,f,A}\|_2^2}
         {\CN(\R^n)\|w_{t,f,A}\|_2^{2+4/n}}\\
 &=\left(1+\frac{a_2}{H_{2,0}}t^2+o(t^2)\right)^{-(1+2/n)}
   \left(1+\frac{b_2}{H_{1,0}}t^2+o(t^2)\right)^{4/n}\\
 &\quad\times\left(1+\frac{d_2}{I_0}t^2+o(t^2)\right)\\
 &=1+t^2\left[
       -\left(1+\frac2n\right)\frac{a_2}{H_{2,0}}
       +\frac4n\frac{b_2}{H_{1,0}}+\frac{d_2}{I_0}
       \right]+o(t^2)\\
 &=1-t^2\Biggl[
  \frac{\E(v)}{n(n+2)}
   \left(2H_{2,4}-4H_{1,4}-\frac2{\lambda_1} I_4\right)\\
 &\qquad+\rho_n|A|^2\biggl\{
  2\int_0^1f^2r^{n-1}\,dr
  -\frac2{\lambda_1}\int_0^1
   \left((f')^2+\frac{2n}{r^2}f^2\right)r^{n-1}\,dr
  \biggr\}\\
 &\qquad+\rho_n\langle A,\Ric\rangle\biggl\{
  -\frac23\int_0^1Ufr^{n+1}\,dr
  +\frac23\int_0^1fr^{n+1}\,dr\\
 &\hspace{40mm}
  +\frac2{3\lambda_1}\int_0^1U'f'r^{n+1}\,dr
  \biggr\}\Biggr]+o(t^2)\\
 &=1-t^2\biggl[
  G_n|\Ric|^2+B_n|W|^2+L_n[f]\langle A,\Ric\rangle+Q_n[f]|A|^2
   \biggr]+o(t^2).
\end{align*}

Here we used
\begin{align*}
 &\frac{\E(v)}{n(n+2)}
       \left(2H_{2,4}-4H_{1,4}-\frac2{\lambda_1} I_4\right)\\
 &=\frac{2\E(v)}{n(n+2)}
       \left(L_4-\frac1{n+4}-\frac{I_4}{\lambda_1}\right)\\
 &=-\frac{P(n,\lambda_1)\E(v)}{3\lambda_1^2n(n+2)(n+4)}\\
 &=-\frac{P(n,\lambda_1)}{3\lambda_1^2n(n+2)(n+4)}
       \left(\frac{|\Ric|^2}{45}-\frac{|\Rm|^2}{120}\right)\\
 &=G_n|\Ric|^2+B_n|W|^2.
\end{align*}
To obtain the expansion of the Nash quotient above, we used the sharp constant
in \eqref{eq:sharp-constant}, the values in \eqref{eq:radial-integrals-k0},
and the norm expansions \eqref{eq:corrected-a}--\eqref{eq:corrected-d},
and then applied Taylor's formula.
The components of $v$ in
\eqref{eq:volume-expansion} are
\begin{equation}\label{eq:volume-tensor}
 v_{ijkl}=-\frac1{40}\nabla_k\nabla_l\Ric_{ij}
 -\frac1{180}R_{isjt}R_{kslt}+\frac1{72}\Ric_{ij}\Ric_{kl};
\end{equation}
see \cite[Lemma~3.4, p.~210]{SchoenYau} and \cite{ChengGN}.
Contracting \eqref{eq:volume-tensor} using
\eqref{eq:fourth-order-contraction} gives
\begin{equation}\label{eq:volume-contraction}
 \begin{aligned}
 \E(v)
 &=-\frac1{20}\Delta S+\frac1{72}S^2
   +\frac1{45}|\Ric|^2-\frac1{120}|\Rm|^2\\
 &=\frac1{45}|\Ric|^2-\frac1{120}|\Rm|^2.
 \end{aligned}
\end{equation}
The last equality uses $S\equiv0$ near $p$, which also gives
$\Delta S(p)=0$. The scalar-flat curvature decomposition is
\begin{equation}\label{eq:weyl-decomposition}
 |\Rm|^2=|W|^2+\frac4{n-2}|\Ric|^2,
 \qquad n\ge3.
\end{equation}
In the calculation above, the second equality substitutes $L_4$ and $I_4$ from
\eqref{eq:L24} and \eqref{eq:I24} and uses \eqref{eq:P}.
The third uses \eqref{eq:volume-contraction}, and the last uses
\eqref{eq:weyl-decomposition} and \eqref{eq:G-B}.
This proves \eqref{eq:corrected-quotient}.

It remains to prove $B_n>0$.
The denominator in \eqref{eq:G-B} is positive, so it suffices
to show $P(n,\lambda_1)>0$.
For $n\ge5$, every term in \eqref{eq:P} is positive.
For $n=4$, $P(4,\lambda_1)=6\lambda_1^2>0$.
For $n=3$, the half-integer Bessel formula gives
\[
 J_{3/2}(s)=\sqrt{\frac{2}{\pi s^3}}(\sin s-s\cos s),
 \qquad s>0.
\]
Since $(\sin s-s\cos s)'=s\sin s>0$ on $(0,\pi)$,
this function has no positive zero at or below $\pi$.
Thus $\lambda_1=j_{3/2,1}^2>\pi^2>4$, and
\begin{align*}
 P(3,\lambda_1)
 &=6\lambda_1^2-14\lambda_1-35\\
 &=6(\lambda_1-4)^2+34(\lambda_1-4)+5>0.
\end{align*}
Therefore, $B_n>0$ for every integer $n\ge3$.
\end{proof}

\begin{remark}\label[remark]{rem:ricci-coefficient}
For $n\ge3$, taking $f=0$ and $A=0$ in \eqref{eq:corrected-quotient} yields
\begin{equation}\label{eq:radial-second}
 \frac{\QN^g(U_{p,t})}{\CN(\R^n)}
 =1-t^2\bigl(G_n|\Ric(p)|^2+B_n|W(p)|^2\bigr)+o(t^2).
\end{equation}
From \eqref{eq:G-B}, we have
\[
 G_n=\frac{4(7-2n)}{3(n-2)}B_n.
\]
Note that $B_n>0$, whereas $G_n<0$ for $n\ge4$. This shows that without the correction terms $f$ and $A$, one cannot conclude that $\Rm_g=0$ from \eqref{rem:ricci-coefficient} and the expression for $\QN^g(U_{p,t})$.

One might also try to vary only the tensor $A$ by taking
$f\equiv1$ in \eqref{eq:perturbed-test}. However, this
choice is not admissible, since it does not satisfy
\eqref{eq:amplitude-class}.
To see why a nearly constant radial profile is insufficient,
choose $f_\varepsilon\in\Dn$ (see the definition in \eqref{eq:radial-form}) such that
$f_\varepsilon=1$ on $[2\varepsilon,1-2\varepsilon]$
and $f_\varepsilon=0$ near $r=0$ and $r=1$,
with transition regions of width $O(\varepsilon)$.
The definitions of $L_n$ and $Q_n$ give
\[
L_n[f_\varepsilon]=O(1),
\qquad
Q_n[f_\varepsilon]\le -\frac{c_n}{\varepsilon}
\]
for all sufficiently small $\varepsilon>0$,
where $c_n>0$ depends only on $n$.
Optimizing the quadratic expression in $A$, we obtain
the maximal Ricci curvature coefficient
\[
G_n-\frac{L_n[f_\varepsilon]^2}
{4Q_n[f_\varepsilon]}
=G_n+O(\varepsilon).
\]
Since $G_n<0$ for $n\ge4$, this coefficient remains
negative for sufficiently small $\varepsilon$.
Thus a nearly constant radial profile, even with an
optimal choice of $A$, does not yield the sign needed
for rigidity. This motivates the choice of a suitable
radial function through the variational problem below.

\end{remark}

\section{Optimization of the Ricci coefficient}\label{sec:optimization-positivity}

\subsection{Reduced to a Variational Problem}\label{sec:variation} 

In view of the expansion formula \eqref{eq:corrected-quotient}, 
if we can choose suitable $f$ and $A$ such that for some $C_1>0$
\begin{equation}
 \frac{\QN^g(w_{t,f,A})}{\CN(\R^n)}
 =1-t^2\bigl(C_1|\Ric(p)|^2+B_n|W(p)|^2\bigr)+o(t^2),
\end{equation}
noting that $B_n>0$, we can conclude under the assumptions of Theorem~\ref{thm:main} that $\Ric=0$, $W=0$, and hence $V$ is flat.

Fix a nonzero $f\in\Hn$. The part of the
expansion formula \eqref{eq:corrected-quotient} that depends on $A$ satisfies
\begin{equation}\label{eq:optimal-tensor}
\begin{aligned}
&L_n[f]\langle A,\operatorname{Ric}\rangle
+Q_n[f]|A|^2\\
&\quad=
Q_n[f]\left|
A+\frac{L_n[f]}{2Q_n[f]}\operatorname{Ric}
\right|^2
-\frac{L_n[f]^2}{4Q_n[f]}|\operatorname{Ric}|^2.
\end{aligned}
\end{equation}
We have $Q_n[f]<0$ (see Remark \ref{rem:Q_n}). Consequently, the maximum of \eqref{eq:optimal-tensor} is attained
uniquely at
\begin{equation}\label{eq:optimal-A}
A=-\frac{L_n[f]}{2Q_n[f]}\operatorname{Ric}.
\end{equation}
In other words, $A=c_f\operatorname{Ric}$, where $c_f=-L_n[f]/(2Q_n[f])\in\R$. Since this constant can be absorbed into the choice of $f$, we may simply take $A=\operatorname{Ric}$. In this case, the
expansion formula \eqref{eq:corrected-quotient} becomes
\begin{equation}
 \frac{\QN^g(w_{t,f,A})}{\CN(\R^n)}
 =1-t^2\bigl(\left(G_n+L_n[f]+Q_n[f]\right)|\Ric|^2+B_n|W|^2
       \bigr)+o(t^2), 
\end{equation}
with $B_n>0$.
Hence, to prove Theorem \ref{thm:main} it suffices
to find an admissible radial function $f$ such that
\[
G_n+L_n[f]+Q_n[f]>0.
\]

Let $\Dn$ be the vector space of functions $f\in C^\infty([0,1])$
satisfying \eqref{eq:amplitude-class}. For $f,g\in\Dn$, define the
bilinear form
\begin{equation}\label{eq:radial-form}
 D_n[f,g]=\int_0^1\left(f'g'+\frac{2n}{r^2}fg\right)r^{n-1}\,dr,
 \qquad D_n[f]=D_n[f,f]\ge 0.
\end{equation}
We equip $\Dn$ with the energy norm
$\|f\|_{\Hn}^2=D_n[f]$ and denote its completion by $\Hn$. Denote
\begin{equation}\label{eq:objective}
 \Gf[f]:=G_n+L_n[f]+Q_n[f].
\end{equation}
Here,
\[
 Q_n[f]
 =2\rho_n\left(\int_0^1f^2r^{n-1}\,dr
              -\frac1{\lambda_1} D_n[f]\right)
\]
is the quadratic term and
\[
 L_n[f]
 =\frac{4\rho_n}{3\lambda_1}\int_0^1F'(r)f(r)r^n\,dr
\]
is the linear term.
Then the required positivity is reduced to a variational problem:
establish the existence of a maximizer of $\Gf$ on $\Hn$ and show
that its value is positive.

\subsection{The solution to the variational problem}

We define the bilinear form
\begin{equation}\label{eq:a-form}
 a_n[f,g]=D_n[f,g]-\lambda_1\int_0^1fgr^{n-1}\,dr.
\end{equation}
Hence, $Q_n[f]
 =-\frac{2\rho_n}{\lambda_1}a_n[f,f]$.
We begin with the coercivity estimate that underlies
the existence and uniqueness of the maximizer.

\begin{lemma}[]\label[lemma]{lem:coercivity}
The form $a_n$ is continuous on $\Hn$ and satisfies
\begin{equation}\label{eq:coercivity}
 a_n[f,f]\geq\frac{n+1}{\lambda_1+n+1}D_n[f],
 \qquad f\in\Hn.
\end{equation}
\end{lemma}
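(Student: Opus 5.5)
The plan is to reduce \eqref{eq:coercivity} to a sharp weighted Poincaré inequality for the angular mode $\ell=2$ and then compare Bessel zeros. Set $X[f]=\int_0^1 f^2r^{n-1}\,dr$. Since $a_n[f,f]=D_n[f]-\lambda_1X[f]$ and $1-\frac{n+1}{\lambda_1+n+1}=\frac{\lambda_1}{\lambda_1+n+1}$, inequality \eqref{eq:coercivity} is equivalent to
\[
 D_n[f]\ \ge\ (\lambda_1+n+1)\,X[f],\qquad f\in\Hn .
\]

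\emph{Continuity.} From $r^{-2}\ge1$ on $(0,1]$ we get $X[f]\le \frac1{2n}D_n[f]$. Cauchy--Schwarz then gives
\[
 |a_n[f,g]|\le \Bigl(1+\frac{\lambda_1}{2n}\Bigr)\|f\|_{\Hn}\|g\|_{\Hn}.
\]
So $a_n$ and $X$ extend continuously to $\Hn$. It therefore suffices to prove the displayed inequality for $f\in\Dn$ and pass to the limit by density.

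\emph{Identifying the optimal constant.} For $f\in\Dn$, put $u(y)=f(|y|)z_A(y/|y|)$ on $B_1$ with $A\ne0$ trace-free. Then $u\in C_c^\infty(B_1)$, because $f=r^2\widehat f(r^2)$ near $0$ makes $u$ a smooth polynomial times a smooth function, and $f$ vanishes near $r=1$. The spherical identities already used in the proof of \cref{prop:corrected} give
\[
 \int_{B_1}|\nabla u|^2=\omega_{n-1}\rho_n|A|^2D_n[f],\qquad
 \int_{B_1}u^2=\omega_{n-1}\rho_n|A|^2X[f].
\]
Hence $D_n[f]\ge\mu X[f]$, where $\mu$ is the first Dirichlet eigenvalue of $-\Delta$ on $B_1$ restricted to the spherical-harmonic sector of degree $2$. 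Separating variables, the radial equation is $f''+\frac{n-1}rf'-\frac{2n}{r^2}f+\mu f=0$, and its regular solution is $r^{-\nu}J_{\nu+2}(\sqrt\mu\,r)$. Thus $\mu=j_{\nu+2,1}^2=j_{n/2+1,1}^2$.

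If one prefers to avoid spectral theory, the same bound follows from a ground-state substitution. Write $\varphi(r)=r^{-\nu}J_{\nu+2}(j_{\nu+2,1}r)$, which is positive on $(0,1)$, and $f=\varphi h$. Integrating by parts, with no boundary terms since $f$ vanishes near $1$ and $f=O(r^2)$ at $0$, gives
\[
 D_n[f]-\mu X[f]=\int_0^1\varphi^2(h')^2r^{n-1}\,dr\ge0 .
\]

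\emph{Comparing Bessel zeros (main obstacle).} It remains to show
\[
 j_{n/2+1,1}^2\ \ge\ j_{n/2,1}^2+n+1 .
\]
Write $\beta=n/2$. The difference of squares factors as
\[
 j_{\beta+1,1}^2-j_{\beta,1}^2=(j_{\beta+1,1}-j_{\beta,1})(j_{\beta+1,1}+j_{\beta,1}).
\]
Two classical facts would finish it:
\begin{enumerate}
\item Watson's monotonicity, $\frac{d}{d\beta}j_{\beta,1}>1$, which gives $j_{\beta+1,1}-j_{\beta,1}>1$;
\item the lower bound $j_{\beta,1}>\beta$, which gives $j_{\beta+1,1}+j_{\beta,1}>2\beta+1=n+1$.
\end{enumerate}
Multiplying these yields the strict inequality.

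I expect this step to be the only nontrivial one. If quoting Watson is undesirable, I would fall back on an elementary route: use a Sturm comparison between the radial equations for $F$ (mode $\ell=0$, order $\nu+1$) and for $\varphi$, or apply the recurrence $J_{\beta+1}'=J_\beta-\frac{\beta+1}{s}J_{\beta+1}$ to relate the two zeros directly. For $n=3$ and $n=4$ I would also verify the inequality numerically as a sanity check.
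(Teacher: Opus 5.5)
Your argument is correct, but it takes a different route from the paper. You correctly reduce to $D_n[f]\ge(\lambda_1+n+1)\int_0^1f^2r^{n-1}\,dr$. You then identify the sharp constant as the degree-$2$ Dirichlet eigenvalue $\mu=j_{n/2+1,1}^2$ via the ground state $\varphi$, and finally import the Bessel-zero inequality $j_{n/2+1,1}^2\ge j_{n/2,1}^2+n+1$.

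The paper runs the same ground-state substitution, but with $v=F'$ in place of $\varphi$. This $v$ is positive on $(0,1)$, vanishes at $r=1$, and solves the degree-$1$ radial equation $-v''-\frac{n-1}{r}v'+\frac{n-1}{r^2}v=\lambda_1 v$ with eigenvalue exactly $\lambda_1$. Writing $f=v\,(f/v)$ and integrating by parts gives
\[
 a_n[f,f]=\int_0^1v^2\bigl|(f/v)'\bigr|^2r^{n-1}\,dr+(n+1)\int_0^1f^2r^{n-3}\,dr\ \ge\ (n+1)\int_0^1f^2r^{n-1}\,dr .
\]
The factor $n+1$ appears because the degree-$2$ and degree-$1$ potentials differ by $(2n-(n-1))/r^2=(n+1)/r^2\ge n+1$. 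This is exactly your target inequality, obtained with no facts about Bessel zeros. In particular, the paper's two-line argument \emph{proves} the inequality $j_{n/2+1,1}^2>j_{n/2,1}^2+n+1$ that you import.

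Your fallback points in this direction, but the right comparison function is $F'\propto r^{-\nu}J_{\nu+1}(\kappa r)$, the degree-$1$ mode of Bessel order $\nu+1$. It is not $F$, which has order $\nu$ and satisfies a Neumann rather than a Dirichlet condition at $r=1$.

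On your route, the step $\frac{d}{d\beta}j_{\beta,1}>1$ is true, but it is a substantially finer result than the classical monotonicity $\frac{d}{d\beta}j_{\beta,1}>0$. Your proof is complete only with a precise citation for it. In exchange, your approach yields the sharp coercivity constant $1-\lambda_1/j_{n/2+1,1}^2$ and its spectral meaning. The paper's constant $\frac{n+1}{\lambda_1+n+1}$ is not sharp, but it is self-contained, uses only $F'>0$ and the differentiated profile equation, and is all that Lax--Milgram requires.
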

\begin{proof}
Set $v=F'$, which is positive on $(0,1)$ by \cref{lem:profile}.
Differentiating \eqref{eq:profile-ode} gives
\[
 -v''-\frac{n-1}{r}v'+\frac{n-1}{r^2}v=\lambda_1 v.
\]
For $f\in\Dn$, write $f=v(f/v)$ and integrate by
parts to obtain
\begin{align}
 a_n[f,f]
 &=\int_0^1v^2\left|\left(\frac f v\right)'\right|^2r^{n-1}\dd
             +(n+1)\int_0^1f^2r^{n-3}\dd
             \label{eq:ground-state}\\
 &\geq(n+1)\int_0^1f^2r^{n-1}\dd\notag\\
 &=\frac{n+1}{\lambda_1}\bigl(D_n[f]-a_n[f,f]\bigr)\notag\\
 &\Longrightarrow\quad
   a_n[f,f]\geq\frac{n+1}{\lambda_1+n+1}D_n[f].\notag
\end{align}
The inequality uses $r^{-2}\geq1$, and the following equality
uses \eqref{eq:a-form}. The final step rearranges the preceding
inequality, proving \eqref{eq:coercivity} for $f\in\Dn$.
Also,
$\int f^2r^{n-1}\dd\leq D_n[f]/(2n)$, so the weighted $L^2$ form
and $a_n$ are continuous in the $D_n$ norm.
\end{proof}

\begin{remark}\label{rem:Q_n}
By Lemma \ref{lem:coercivity} and the fact that $D_n[f]>0$ for $f\not\equiv 0$, we have for $f\not\equiv 0$
\begin{equation*}
Q_n[f]=-\frac{2\rho_n}{\lambda_1}a_n[f,f]
\le -\frac{2\rho_n(n+1)}{\lambda_1(\lambda_1+n+1)}D_n[f]<0.
\end{equation*}
\end{remark}

\begin{remark}\label[remark]{rem:variational-equation}
For $\eta\in C_c^\infty(0,1)\subset\Hn$, we compute the first
variation at a candidate maximizer $f_*$:
\begin{align*}
\left.\frac{d}{ds}\right|_{s=0}\Gf[f_*+s\eta]
&=L_n[\eta]-\frac{4\rho_n}{\lambda_1}a_n[f_*,\eta]\\
&=-\frac{4\rho_n}{\lambda_1}\int_0^1\left[
 f_*'\eta'+\left(\frac{2n}{r^2}-\lambda_1\right)f_*\eta
 -\frac13rF'\eta\right]r^{n-1}\dd\\
&=-\frac{4\rho_n}{\lambda_1}\int_0^1\left[
 -f_*''-\frac{n-1}{r}f_*'
 +\left(\frac{2n}{r^2}-\lambda_1\right)f_*
 -\frac13rF'\right]\eta\,r^{n-1}\dd.
\end{align*}
At a maximizer, the first variation must vanish for every
$\eta\in\Hn$, giving the weak equation \eqref{eq:weak-EL} below.
In particular, the final integral above is zero for every
$\eta\in C_c^\infty(0,1)$. Thus, define the differential operator
\[
 \Ln:=-\frac{d^2}{dr^2}-\frac{n-1}{r}\frac d{dr}
                    +\frac{2n}{r^2}-\lambda_1.
\]
The problem then reduces to solving the following ordinary differential
equation in $\Hn$:
\begin{equation}\label{eq:strong-EL}
 \Ln f_*=\frac13rF'.
\end{equation}
Indeed,  for every nonzero $\eta\in\Hn$,
\[
 \left.\frac{d^2}{ds^2}\right|_{s=0}\Gf[f+s\eta]
 =-\frac{4\rho_n}{\lambda_1}a_n[\eta,\eta]<0.
\]
Hence $\Gf$ is strictly concave, so any stationary point is its
unique global maximizer. The following lemma establishes existence,
gives the corresponding difference identity, and constructs the
solution explicitly.
\end{remark}

\begin{lemma}[]\label[lemma]{prop:maximizer}
There is a unique maximizer $f_*\in\Hn$ of \eqref{eq:objective}.
It is characterized by
\begin{equation}\label{eq:weak-EL}
 a_n[f_*,\eta]=\frac13\int_0^1rF'\eta\,r^{n-1}\dd,
 \qquad \eta\in\Hn,
\end{equation}
and satisfies
\begin{equation}\label{eq:gap}
 \Gf[f_*]-\Gf[f]=\frac{2\rho_n}{\lambda_1}
                      a_n[f-f_*,f-f_*],\qquad f\in\Hn.
\end{equation}
More explicitly \begin{equation}\label{eq:explicit-f}
 f_*(r)=\frac1{12}\bigl(\Psi(r)-r^2F(r)\bigr),
\end{equation}
where
\begin{equation}\label{eq:Psi}
 \Psi(r):=\frac{r^{-\nu}J_{\nu+2}(\kappa r)}{J_{\nu+2}(\kappa)}.
\end{equation}

\end{lemma}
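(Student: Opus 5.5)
The plan is to split the statement into an abstract Hilbert-space part (existence, uniqueness, the characterization \eqref{eq:weak-EL} and the gap identity \eqref{eq:gap}) and a concrete ODE part (the formula \eqref{eq:explicit-f}).

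\emph{Abstract part.} By \cref{lem:coercivity}, $a_n$ is a continuous, coercive, symmetric bilinear form on $\Hn$. The functional $\eta\mapsto \frac13\int_0^1 rF'\eta\,r^{n-1}\dd$ is bounded on $\Hn$: $F'$ is bounded on $[0,1]$, so Cauchy--Schwarz gives the bound $C\bigl(\int_0^1\eta^2 r^{n-1}\dd\bigr)^{1/2}$, and $\int_0^1\eta^2 r^{n-1}\dd\le D_n[\eta]/(2n)$. Lax--Milgram (here simply Riesz) then gives a unique $f_*\in\Hn$ satisfying \eqref{eq:weak-EL}. Note that $L_n[\eta]=\frac{4\rho_n}{\lambda_1}\cdot\frac13\int_0^1 rF'\eta\,r^{n-1}\dd$ and $Q_n[\eta]=-\frac{2\rho_n}{\lambda_1}a_n[\eta,\eta]$. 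Writing $f=f_*+h$ and expanding the quadratic gives
\[
\Gf[f]=\Gf[f_*]+\frac{4\rho_n}{\lambda_1}\Bigl(\tfrac13\textstyle\int_0^1 rF'h\,r^{n-1}\dd-a_n[f_*,h]\Bigr)-\frac{2\rho_n}{\lambda_1}a_n[h,h].
\]
The middle term vanishes by \eqref{eq:weak-EL}, which is exactly \eqref{eq:gap}. Coercivity then shows that $f_*$ is the unique maximizer. Conversely, any maximizer must satisfy \eqref{eq:weak-EL}, since the first variation computed in \cref{rem:variational-equation} vanishes for all $\eta\in\Hn$. This proves the characterization.

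\emph{Explicit solution.} Set $\tilde f=\frac1{12}(\Psi-r^2F)$. First, a direct computation using \eqref{eq:profile-ode} gives
\[
\Ln(r^2F)=-r^2\Bigl(F''+\tfrac{n-1}rF'+\lambda_1F\Bigr)-4rF'=-4rF'.
\]
Second, $r^{-\nu}J_{\nu+2}(\kappa r)$ is the radial factor of a degree-two spherical-harmonic solution of $\Delta u+\kappa^2u=0$ in $\R^n$. Since $\ell(\ell+n-2)=2n$ for $\ell=2$, this gives $\Ln\Psi=0$, which one checks from the Bessel equation with $\beta=\nu+2$. Hence $\Ln\tilde f=\frac13rF'$ classically on $(0,1]$.

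Next come the boundary conditions.
\begin{itemize}
\item At $r=1$: $\tilde f(1)=\frac1{12}(1-F(1))=0$.
\item At $r=0$: by the series \eqref{eq:bessel-series}, $r^{-\nu}J_{\nu+2}(\kappa r)=r^2\cdot(\text{even power series in }r)$, and $r^2F$ is of the same form. So $\tilde f=r^2\widehat f(r^2)$ with $\widehat f$ smooth.
\end{itemize}
Then, for $\eta\in\Dn$, integrate by parts on $[\epsilon,1]$. The boundary term at $1$ vanishes because $\eta=0$ near $1$. The term at $\epsilon$ is $O(\epsilon^{n+2})\to0$. This yields \eqref{eq:weak-EL} for $\eta\in\Dn$, and it extends to $\Hn$ by density and continuity.

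\emph{Main obstacle.} The delicate point is showing $\tilde f\in\Hn$, since $\tilde f$ does not vanish identically near $r=1$ and so is not in $\Dn$. I would approximate it by $\tilde f_\delta=\tilde f\,\chi_\delta$, where $\chi_\delta$ is a cutoff equal to $1$ on $[0,1-2\delta]$ and $0$ on $[1-\delta,1]$, with $|\chi_\delta'|\lesssim1/\delta$. Because $\tilde f$ is smooth with $\tilde f(1)=0$, we have $|\tilde f|\lesssim\delta$ on the transition region, so $|\tilde f\chi_\delta'|=O(1)$ on a set of length $O(\delta)$. Hence $D_n[\tilde f-\tilde f_\delta]\to0$, so $\tilde f_\delta$ is Cauchy in $\Hn$ with limit $\tilde f$. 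Combined with uniqueness from Lax--Milgram, this gives $f_*=\tilde f$.
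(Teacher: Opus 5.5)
Your proposal is correct and follows the paper's proof essentially step for step. It uses Lax--Milgram for the weak Euler--Lagrange equation, expands around $f_*$ to get the gap identity, takes the particular solution $-r^2F/12$ plus the regular Bessel homogeneous solution normalized at $r=1$, and applies a cutoff near $r=1$ to place the result in $\Hn$. The only detail you leave implicit, which the paper checks, is that $\Psi$ is well defined: $J_{\nu+2}(\kappa)=-J_\nu(\kappa)\neq0$, by the third recurrence identity and $J_{\nu+1}(\kappa)=0$.
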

\begin{proof}
To apply the Lax--Milgram theorem, we first verify the continuity
on $\Hn$ of the linear functional on the right-hand side of the weak equation
\eqref{eq:weak-EL}. By Cauchy--Schwarz
and $\int_0^1f^2r^{n-1}\dd\leq D_n[f]/(2n)$,
\begin{align*}
 \left|\int_0^1F'fr^n\dd\right|
 &\leq\left(\int_0^1(F')^2r^{n+1}\dd\right)^{1/2}
       \left(\int_0^1f^2r^{n-1}\dd\right)^{1/2}\\
 &\leq\frac1{\sqrt{2n}}
       \left(\int_0^1(F')^2r^{n+1}\dd\right)^{1/2}
       \|f\|_{\Hn}.
\end{align*}
Thus $L_n$ is continuous on $\Hn$, and the right-hand side of
\eqref{eq:weak-EL} is a continuous linear functional. Together
with \cref{lem:coercivity}, this allows the Lax--Milgram theorem
to yield a unique solution $f_*\in\Hn$ of \eqref{eq:weak-EL}.
Writing \(f=f_*+\eta\) and using
the variational equation \eqref{eq:weak-EL}, we obtain the identity
\eqref{eq:gap}, because
\[
 L_n[\eta]=\frac{4\rho_n}{\lambda_1} a_n[f_*,\eta].
\]
This proves maximality and uniqueness.

To solve the Euler--Lagrange equation \eqref{eq:strong-EL} explicitly,
we first construct a particular solution. A direct calculation gives
\begin{align}
 \Ln(r^2F)
 &=-r^2(F''+\lambda_1 F)-(n+3)rF'\notag\\
 &=(n-1)rF'-(n+3)rF'=-4rF'.\label{eq:particular-solution}
\end{align}
The second equality uses \eqref{eq:profile-ode}. Hence
$-r^2F/12$ is a particular solution of \eqref{eq:strong-EL}.

Next, we solve the homogeneous equation $\Ln h=0$. Set
$h(r)=r^{-\nu}\phi(s)$, where $s=\kappa r$. Substitution gives
\[
 \Ln h=-r^{-\nu-2}
 \left[s^2\phi''+s\phi'+\bigl(s^2-(\nu+2)^2\bigr)\phi\right].
\]
Here derivatives of $\phi$ are taken with respect to $s$, and we used
$\kappa^2=\lambda_1$ and $\nu^2+2n=(\nu+2)^2$.
Thus $\phi$ satisfies the Bessel equation of order $\nu+2$.
Choosing $\phi(s)=J_{\nu+2}(s)$ and returning to the variable $r$
gives a regular homogeneous solution
\[
 h(r)=r^{-\nu}J_{\nu+2}(\kappa r)=O(r^2)
 \qquad(r\downarrow0).
\]
The behaviour at the origin follows from the Bessel series
\eqref{eq:bessel-series}.
The third identity in \eqref{eq:bessel-recurrences} and
$J_{\nu+1}(\kappa)=0$ give
$J_{\nu+2}(\kappa)=-J_\nu(\kappa)\ne0$.
Normalizing this solution to equal one at $r=1$ gives
$\Psi$ in \eqref{eq:Psi}.

Finally, we use $\Psi$ and the particular solution to construct a
solution of \eqref{eq:strong-EL} by setting
\[
 f(r)=-\frac1{12}r^2F(r)+c\Psi(r).
\]
Since functions in $\Hn$ must satisfy $f(1)=0$, we determine $c$ from
\[
 0=f(1)=-\frac1{12}F(1)+c\Psi(1)
       =-\frac1{12}+c
 \quad\Longrightarrow\quad c=\frac1{12}.
\]
Here $F(1)=\Psi(1)=1$. This yields the expression in
\eqref{eq:explicit-f}.
The Bessel series shows that $f(r)=r^2\widehat f(r^2)$ near the
origin for a smooth function $\widehat f$.
Since $f$ is smooth up to $r=1$ and $f(1)=0$, cutting it off near
$r=1$ approximates $f$ in the energy norm, so $f\in\Hn$.
For $\eta\in\Dn$, integration by parts now gives
\[
 a_n[f,\eta]
 =\int_0^1(\Ln f)\eta r^{n-1}\dd
 =\frac13\int_0^1rF'\eta r^{n-1}\dd.
\]
By density and continuity, this identity holds for every
$\eta\in\Hn$. Thus $f$ satisfies \eqref{eq:weak-EL}, and
uniqueness gives $f=f_*$, proving \eqref{eq:explicit-f}.
\end{proof}

\begin{lemma}[]\label[lemma]{prop:optimal-value}
The maximum $\Gamma_n=\Gf[f_*]$ is
\begin{equation}\label{eq:Gamma-gain}
 \Gamma_n=G_n+
 \frac{4(2n+1)\lambda_1+(n+2)(n-4)(n-2)}
 {216n(n+2)\lambda_1^2}.
\end{equation}
Thus, after taking $A=\Ric(p)$ in \eqref{eq:corrected-quotient},
the coefficient of $t^2|\Ric(p)|^2$ is $-\Gf[f]$,
which attains its minimum $-\Gamma_n$ over $\Hn$ at $f=f_*$.
\end{lemma}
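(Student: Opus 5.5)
The plan is to avoid evaluating $L_n[f_*]$ and $Q_n[f_*]$ separately and instead use the Euler--Lagrange equation to reduce the gain $\Gamma_n-G_n$ to a single radial integral. Taking $\eta=f_*$ in \eqref{eq:weak-EL} gives
\[
a_n[f_*,f_*]=\frac13\int_0^1F'f_*r^n\dd=\frac{\lambda_1}{4\rho_n}L_n[f_*].
\]
Since $Q_n[f_*]=-\frac{2\rho_n}{\lambda_1}a_n[f_*,f_*]$, this means $Q_n[f_*]=-\tfrac12L_n[f_*]$. Hence
\[
\Gamma_n-G_n=\tfrac12L_n[f_*]=\frac{2\rho_n}{3\lambda_1}\int_0^1F'f_*r^n\dd .
\]
Substituting the explicit form \eqref{eq:explicit-f}, it then remains to compute the two integrals $\int_0^1F'\Psi r^n\dd$ and $\int_0^1FF'r^{n+2}\dd$.

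The second integral is elementary. Integrating by parts, and using $F(1)=1$ together with the fact that the boundary term vanishes at $0$, gives
\[
\int_0^1FF'r^{n+2}\dd=\frac12-\frac{n+2}{2}L_2 .
\]
We then insert $L_2=\frac16+\frac{(n-2)(n-4)}{12\lambda_1}$ from \cref{lem:radial-integrals}.

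The first integral is the main obstacle, since it mixes $J_{\nu+1}$ and $J_{\nu+2}$. I would avoid Bessel product integrals and use \eqref{eq:particular-solution} instead. That identity gives $rF'=-\frac14\Ln(r^2F)$, and we also have $\Ln\Psi=0$. Because $\Ln$ is formally self-adjoint with respect to $r^{n-1}dr$, Green's identity reduces the integral to boundary terms:
\[
\int_0^1 rF'\,\Psi\, r^{n-1}\dd=-\frac14\int_0^1\Ln(r^2F)\Psi r^{n-1}\dd
=\frac14\Bigl[(r^2F)'\Psi-r^2F\,\Psi'\Bigr]_{r=0}^{r=1}.
\]
The contribution at $0$ vanishes, because both $r^2F$ and $\Psi$ are $O(r^2)$ there. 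At $r=1$ we have $r^2F=1$, $(r^2F)'=2$ (using $F'(1)=0$), and $\Psi(1)=1$.

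To obtain $\Psi'(1)$, I would use the recurrence $J_{\nu+2}'=J_{\nu+1}-\frac{\nu+2}{s}J_{\nu+2}$ together with $J_{\nu+1}(\kappa)=0$. This gives $\Psi'(1)=-\nu-(\nu+2)=-n$. The integral therefore equals $\frac{n+2}{4}$.

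Combining the two results,
\[
\int_0^1F'f_*r^n\dd=\frac1{12}\Bigl(\frac{n+2}4-\frac12+\frac{n+2}2L_2\Bigr)
=\frac{4(2n+1)\lambda_1+(n+2)(n-2)(n-4)}{288\lambda_1}.
\]
Multiplying by $\frac{2\rho_n}{3\lambda_1}=\frac{4}{3n(n+2)\lambda_1}$ yields \eqref{eq:Gamma-gain}.

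For the final assertion, we take $A=\Ric(p)$ in \eqref{eq:corrected-quotient}. The coefficient of $-t^2|\Ric|^2$ is then $G_n+L_n[f]+Q_n[f]=\Gf[f]$. By \eqref{eq:gap} and \cref{lem:coercivity}, $\Gf[f]\le\Gf[f_*]=\Gamma_n$, with equality only at $f=f_*$. Consequently the coefficient $-\Gf[f]$ of $t^2|\Ric(p)|^2$ attains its minimum $-\Gamma_n$ over $\Hn$ at $f=f_*$.
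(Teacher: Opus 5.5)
Your proposal is correct. Its skeleton is the paper's: you test \eqref{eq:weak-EL} with $\eta=f_*$ to get $Q_n[f_*]=-\tfrac12L_n[f_*]$, hence $\Gamma_n-G_n=\frac{\rho_n}{18\lambda_1}\int_0^1F'(\Psi-r^2F)r^n\,dr$. You then handle $\int_0^1FF'r^{n+2}\,dr$ by one integration by parts against $L_2$.

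Where you differ is the term $\int_0^1F'\Psi r^n\,dr$.
\begin{itemize}
\item The paper uses the Bessel recurrences to write $\Psi=F+\frac{n}{\lambda_1 r}F'$. This splits the integral into $\frac12-\frac n2L_0+\frac n{\lambda_1}I_0$, which is evaluated from the tabulated values $L_0=\tfrac12$ and $I_0=\lambda_1/2$ of \cref{lem:radial-integrals}.
\item You instead pair the particular-solution identity $\Ln(r^2F)=-4rF'$ with $\Ln\Psi=0$ and apply Green's identity. The integral then collapses to boundary data at $r=1$: $F(1)=1$, $F'(1)=0$, $\Psi(1)=1$, $\Psi'(1)=-n$.
\end{itemize}
Both routes give $\frac{n+2}{4}$. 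Your route reuses exactly the objects built in the proof of \cref{prop:maximizer} and needs no interior integrals, which makes transparent why the value is so simple. The paper's route keeps all the bookkeeping inside the single table of radial integrals used elsewhere in the paper. Both need one Bessel recurrence: you use it for $\Psi'(1)$, the paper for $\Psi$ itself.

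One small correction: for the weight $r^{n-1}$, the boundary term in Green's identity is $\frac14\bigl[r^{n-1}\bigl((r^2F)'\Psi-r^2F\Psi'\bigr)\bigr]_0^1$. You dropped the factor $r^{n-1}$. This is harmless here, since it equals $1$ at $r=1$ and the bracket vanishes at $r=0$ in any case.

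Your treatment of the final assertion, via \eqref{eq:gap} together with \cref{lem:coercivity} (giving strict inequality for $f\neq f_*$), is fine.
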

\begin{proof}
To compute the maximum, we rewrite $\Psi$ in terms of $F$ and $F'$.
The first and third identities in \eqref{eq:bessel-recurrences} give
\begin{equation}\label{eq:Psi-F}
 \begin{split}
 J_{\nu+2}(\kappa r)
 &=\frac n{\kappa r}J_{\nu+1}(\kappa r)-J_\nu(\kappa r),\\
 \Psi(r)&=F(r)+\frac n{\lambda_1 r}F'(r).
 \end{split}
\end{equation}
We compute
\begin{align*}
 \Gamma_n
 &=G_n+L_n[f_*]+Q_n[f_*]\\
 &=G_n+\frac{4\rho_n}{3\lambda_1}\int_0^1F'f_*r^n\dd
           -\frac{2\rho_n}{\lambda_1}a_n[f_*,f_*]\\
 &=G_n+\frac{\rho_n}{18\lambda_1}
           \int_0^1F'(\Psi-r^2F)r^n\dd\\
 &=G_n+\frac{\rho_n}{18\lambda_1}\biggl\{
       \int_0^1FF'r^n\dd-\int_0^1FF'r^{n+2}\dd\\
 &\hspace{38mm}
       +\frac n{\lambda_1}\int_0^1(F')^2r^{n-1}\dd
       \biggr\}\\
 &=G_n+\frac{\rho_n}{18\lambda_1}
       \left(\frac12-\frac n2L_0-\frac12
                   +\frac{n+2}{2}L_2+\frac n{\lambda_1} I_0\right)\\
 &=G_n+\frac{4(2n+1)\lambda_1+(n+2)(n-4)(n-2)}
                 {216n(n+2)\lambda_1^2}.
\end{align*}
Here we used \eqref{eq:objective}, \eqref{eq:L-form},
and \eqref{eq:weak-EL} with $\eta=f_*$.
The substitutions for $f_*$ and $\Psi$ use
\eqref{eq:explicit-f} and \eqref{eq:Psi-F}.
Integration by parts reduces the remaining integrals to $L_0$, $L_2$,
and $I_0$. The final equality substitutes $L_0=1/2$ and
$I_0=\lambda_1/2$ from \eqref{eq:radial-integrals-k0}, $L_2$ from
\eqref{eq:L24}, and $\rho_n=2/[n(n+2)]$.
This proves \eqref{eq:Gamma-gain}.
\end{proof}

\subsection{Positivity in every dimension}\label{sec:positivity}
Combining \eqref{eq:G-B} and \eqref{eq:Gamma-gain} gives
\begin{equation}\label{eq:Gamma-S}
 \Gamma_n=\frac{S_n(\lambda_1)}
 {1080(n-2)n(n+2)(n+4)\lambda_1^2},
\end{equation}
where
\begin{equation}\label{eq:S}
 \begin{split}
 S_n(s):={}&-24(2n-7)s^2
 +12(n+4)(2n^2+5n-22)s\\
 &+3(6-n)(n^2-4)(n^2-16).
 \end{split}
\end{equation}
We first obtain explicit bounds for \(\lambda_1=j_{n/2,1}^2\), then prove that \(S_n\) is positive throughout the resulting interval.

\begin{lemma}[]\label[lemma]{lem:zero-bounds}
For $n\geq3$ and $\lambda_1=j_{n/2,1}^2$,
\begin{equation}\label{eq:zero-bounds}
 \frac{n^2}{4}<\lambda_1\leq\Lambda_n,
 \qquad \Lambda_n=\left(\frac n2+1\right)
                 \left(\sqrt{\frac n2+2}+1\right)^2.
\end{equation}
\end{lemma}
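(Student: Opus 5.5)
The plan is to treat the two inequalities in \eqref{eq:zero-bounds} separately. The lower bound will come from a Rayleigh-quotient argument together with a Hardy inequality, both built on the ODE for $v=F'$ already used in the proof of \cref{lem:coercivity}. For the upper bound I intend to invoke a classical explicit bound on the first zero of a Bessel function with $\nu=n/2$; if I have to be self-contained, I will instead try a test-function argument in the same Rayleigh quotient.

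\emph{Lower bound.} By \cref{lem:profile}, $v=F'$ is positive on $(0,1)$. By \eqref{eq:profile-ode} we have $v(1)=F'(1)=0$. The Bessel series gives $v(r)=O(r)$ near $0$. Moreover $v$ solves
\[
-v''-\tfrac{n-1}{r}v'+\tfrac{n-1}{r^2}v=\lambda_1 v .
\]
I will multiply this equation by $v\,r^{n-1}$ and integrate by parts. The boundary terms vanish because $v(1)=0$ and $r^{n-1}vv'\to0$ as $r\to0$. This yields
\[
\lambda_1\int_0^1 v^2r^{n-1}\dd=\int_0^1\Bigl((v')^2+\tfrac{n-1}{r^2}v^2\Bigr)r^{n-1}\dd .
\]
Next I apply the one-dimensional Hardy inequality
\[
\int_0^1 (v')^2r^{n-1}\dd\ge\tfrac{(n-2)^2}{4}\int_0^1v^2r^{n-3}\dd,
\]
which is valid for such $v$ with $v(1)=0$. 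The right-hand side is then at least $\bigl(\tfrac{(n-2)^2}{4}+n-1\bigr)\int_0^1 v^2r^{n-3}\dd=\tfrac{n^2}{4}\int_0^1 v^2r^{n-3}\dd$. Since $r^{-2}>1$ on $(0,1)$ and $v\not\equiv0$, this last quantity is strictly larger than $\tfrac{n^2}{4}\int_0^1 v^2 r^{n-1}\dd$, which gives $\lambda_1>n^2/4$. Equivalently, this is the classical fact that $j_{\nu,1}>\nu$ applied with $\nu=n/2$, and I could simply cite that instead.

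\emph{Upper bound.} Writing $\mu=n/2$, the claimed bound $\lambda_1\le\Lambda_n$ is exactly
\[
j_{\mu,1}\le\sqrt{\mu+1}\,\bigl(\sqrt{\mu+2}+1\bigr),
\]
squared. This is a known upper bound for the first positive zero of $J_\mu$, valid for all $\mu>-1$ (it appears, e.g., in the work of Chambers and of Lorch on Bessel zeros). My main route is to quote it and substitute $\mu=n/2$. If a self-contained argument is preferred, I would use the variational characterisation above: the radial Dirichlet problem for $v$ has $\lambda_1$ as its lowest eigenvalue, since $v>0$. Hence $\lambda_1$ is at most the quotient $\int\bigl((\phi')^2+\tfrac{n-1}{r^2}\phi^2\bigr)r^{n-1}\big/\int\phi^2r^{n-1}$ for any admissible $\phi$. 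I would test this with $\phi=r^{\alpha}(1-r^{\beta})$, optimising over $\alpha$ and $\beta$. The integrals are then Beta-type and explicit.

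\emph{Main obstacle.} The hard step is reproducing the exact constant $\Lambda_n$ by elementary test functions; a naive trial function gives a cruder bound. If that happens, I would fall back on citing the classical zero bound. The later positivity argument only needs some upper bound of this size, so a slightly different explicit $\Lambda_n$ would also suffice, provided $S_n$ stays positive on the resulting interval.
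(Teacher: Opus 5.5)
Your proof is correct and takes a somewhat different route from the paper in both halves. For the lower bound, the paper passes to the Liouville form $y=r^{1/2}J_{n/2}(\kappa r)$, which solves $-y''+\frac{n^2-1}{4r^2}\,y=\lambda_1 y$ with $y(0)=y(1)=0$. It then combines the Dirichlet Poincar\'e inequality $\int_0^1(y')^2\,dr\ge\pi^2\int_0^1y^2\,dr$ with $r^{-2}\ge1$ to get $\lambda_1\ge\pi^2+\frac{n^2-1}{4}$. You stay with $v=F'$ and use the sharp weighted Hardy inequality. Its constant $\frac{(n-2)^2}{4}$ adds to the centrifugal coefficient $n-1$ to give exactly $\frac{n^2}{4}$, so strictness has to come from $r^{-2}>1$. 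In Liouville variables your step is $\int_0^1(y')^2\,dr\ge\frac14\int_0^1 r^{-2}y^2\,dr$ in place of Poincar\'e. Both arguments are complete: the paper's leaves a margin of about $\pi^2$, while yours reuses the ODE from the proof of \cref{lem:coercivity}.

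For the upper bound, quoting Chambers' estimate is legitimate, since the paper attributes it to \cite{Chambers} as well. The paper, however, proves it. Write $\mu=n/2$ as you do. Setting $\phi=rw$ turns your $v$-problem into the radial Dirichlet problem in dimension $n+2$, with the same Rayleigh quotient. The paper tests $w=(1-r^2)^\beta$, and Beta integrals give $\frac{2\beta(\mu+1)(\mu+2\beta+1)}{2\beta-1}=(\mu+1)\bigl(u+\mu+3+\frac{\mu+2}{u}\bigr)$ with $u=2\beta-1$ ranging over $(0,\infty)$. AM--GM at $u=\sqrt{\mu+2}$ then yields exactly $\Lambda_n$.

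Your fallback family is where the loss you anticipated actually occurs. With the natural exponent $1$ at the origin, $\phi=r(1-r^b)$, i.e.\ $w=1-r^b$, the quotient is the same function of $u$. But now $u=\mu+b$ is confined to $(\mu,\infty)$. Since $\sqrt{\mu+2}<\mu$ for $n\ge5$, the optimum is out of reach, and you get a bound strictly larger than $\Lambda_n$. Freeing the exponent at the origin only produces a two-parameter rational optimization that you would still have to compare with $\Lambda_n$ for each $n$. For a self-contained proof, put the free exponent on $1-r^2$ instead, i.e.\ test with $\phi=r(1-r^2)^\beta$.
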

\begin{proof}
Set $\alpha=n/2$. The function
$y(r)=r^{1/2}J_\alpha(j_{\alpha,1}r)$ solves
\[
 -y''+\frac{\alpha^2-1/4}{r^2}y=\lambda_1 y,
 \qquad y(0)=y(1)=0.
\]
Multiplying the differential equation by \(y\), integrating by parts, and using \(r^{-2}\ge1\) together with the one-dimensional Dirichlet Poincar\'e inequality, we obtain
\[
 \lambda_1=\frac{\int_0^1|y'|^2\dd
       +(\alpha^2-1/4)\int_0^1r^{-2}y^2\dd}{\int_0^1y^2\dd}
 \geq\pi^2+\alpha^2-\frac14>\alpha^2.
\]

For the upper bound we give the Rayleigh--Ritz argument for the
Chambers estimate \cite{Chambers}. The radial Dirichlet operator
$-v''-(2\alpha+1)r^{-1}v'$ has first eigenvalue $j_{\alpha,1}^2$.
Take $v(r)=(1-r^2)^\beta$ with $\beta>1/2$.
Then $v(1)=0$ and
$\int_0^1(v')^2r^{2\alpha+1}\,dr<\infty$.
The Rayleigh--Ritz principle gives
\begin{equation}\label{eq:rayleigh-upper-bound}
\begin{aligned}
 \lambda_1=j_{\alpha,1}^2
 &\leq\frac{\int_0^1(v')^2r^{2\alpha+1}\dd}
      {\int_0^1v^2r^{2\alpha+1}\dd}\\
 &\overset{s=r^2}{=}\frac{4\beta^2\mathrm B(\alpha+2,2\beta-1)}
            {\mathrm B(\alpha+1,2\beta+1)}\\
 &=\frac{2\beta(\alpha+1)(\alpha+2\beta+1)}{2\beta-1}.
\end{aligned}
\end{equation}
Here $\mathrm B$ denotes the Beta function,
\[
 \mathrm B(x,y)=\int_0^1s^{x-1}(1-s)^{y-1}\,ds
 =\frac{\Gamma(x)\Gamma(y)}{\Gamma(x+y)},\qquad x,y>0.
\]
To minimize this upper bound, we use the arithmetic--geometric
mean inequality:
\begin{align*}
 \frac{2\beta(\alpha+1)(\alpha+2\beta+1)}{2\beta-1}
 &=(\alpha+1)\left[\alpha+3+(2\beta-1)
                  +\frac{\alpha+2}{2\beta-1}\right]\\
 &\geq(\alpha+1)\left(\alpha+3+2\sqrt{\alpha+2}\right)\\
 &=(\alpha+1)(\sqrt{\alpha+2}+1)^2.
\end{align*}
Equality holds if and only if $2\beta-1=\sqrt{\alpha+2}$, that is,
\[
 \beta=\frac{1+\sqrt{\alpha+2}}{2}.
\]
Choosing this value of $\beta$ in \eqref{eq:rayleigh-upper-bound} gives
$\lambda_1\leq(\alpha+1)(\sqrt{\alpha+2}+1)^2$.
This proves the upper bound in \eqref{eq:zero-bounds}.
\end{proof}

\begin{theorem}[]\label{thm:positivity}
For every integer $n\geq3$, the maximum
\[
 \Gamma_n=\max_{f\in\Hn}\bigl(G_n+L_n[f]+Q_n[f]\bigr)
         =\Gf[f_*],
\]
given explicitly by \eqref{eq:Gamma-gain}, is strictly positive.
\end{theorem}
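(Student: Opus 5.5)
We prove positivity of the numerator in \eqref{eq:Gamma-S}. For $n\ge3$ the denominator $1080(n-2)n(n+2)(n+4)\lambda_1^2$ is positive, so it suffices to show $S_n(\lambda_1)>0$. We only know $\lambda_1$ through the bounds $n^2/4<\lambda_1\le\Lambda_n$ of \cref{lem:zero-bounds}, so the plan is to prove $S_n>0$ on the whole interval $(n^2/4,\Lambda_n]$. The quadratic \eqref{eq:S} has leading coefficient $-24(2n-7)$. This is positive for $n=3$ and negative for $n\ge4$, so we split into these two cases.

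\emph{Case $n=3$.} Here $S_3(s)=24s^2+924s-315$, which is positive for every $s\ge1$. In the proof of \cref{prop:corrected} we already saw that $\lambda_1>\pi^2>1$, so $S_3(\lambda_1)>0$.

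\emph{Case $n\ge4$.} Now $S_n$ is concave, so on $[n^2/4,\Lambda_n]$ it is bounded below by the linear interpolation of its endpoint values. It therefore suffices to show $S_n(n^2/4)\ge0$ and $S_n(\Lambda_n)>0$.

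For the left endpoint, direct expansion gives
\[
\tfrac13 S_n(n^2/4)=\tfrac{45}{2}n^4+18n^3-208n^2-64n+384 .
\]
The $n^5$ terms cancel. This equals $3712>0$ at $n=4$, and it is clearly increasing for $n\ge4$, since $\tfrac{45}{2}n^4$ dominates $208n^2$ there.

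For the right endpoint, write $\alpha=n/2$ and $s=\alpha^2+\delta$, and Taylor expand exactly:
\[
S_n(\alpha^2+\delta)=S_n(\alpha^2)+S_n'(\alpha^2)\,\delta-24(2n-7)\delta^2 .
\]
The leading cubic terms of
\[
S_n'(\alpha^2)=-48(2n-7)\alpha^2+12(n+4)(2n^2+5n-22)
\]
cancel, leaving a polynomial of the form $240n^2+O(n)$ with a positive leading coefficient. We expect it to be positive for all $n\ge4$; this is checked directly.

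We then substitute
\[
\delta=\Lambda_n-\alpha^2=(\alpha+1)(m+1)^2-\alpha^2,\qquad m=\sqrt{\alpha+2}\ \ (\text{so }\alpha=m^2-2).
\]
This turns $S_n(\Lambda_n)$ into a polynomial in $m$ with $m\ge2$. Heuristically, $\delta\approx2\alpha^{3/2}$. The negative term is then $-48n\delta^2\sim-24n^4$, and it is beaten by the positive linear term $S_n'(\alpha^2)\delta\sim 168\,n^{7/2}\cdot$(a positive constant) together with $S_n(\alpha^2)\sim\tfrac{45}{2}\cdot3\,n^4$. So positivity holds with room to spare for large $n$.

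To make this rigorous, we expand the polynomial in $m$. We then either show that all its coefficients are positive after the shift $m=2+\mu$ with $\mu\ge0$, or bound the few negative coefficients by the leading ones once $m\ge m_0$. The finitely many remaining dimensions $4\le n\le n_0$ are checked by evaluating $S_n(\Lambda_n)$ numerically, with rational upper bounds for $m$.

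\emph{Main obstacle.} The hard part is this right-endpoint estimate. Because the leading orders cancel, the sign is decided by subleading terms involving $\sqrt{n/2+2}$, so the polynomial bookkeeping must be done carefully. If it turns out to be too lossy for some small $n$, the fallback is to use sharper known values of $j_{n/2,1}$ in those dimensions instead of $\Lambda_n$.
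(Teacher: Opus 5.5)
Your overall structure is the paper's. You split on the sign of the leading coefficient $-24(2n-7)$, treat $n=3$ by positivity of $S_3$ (the paper uses $\lambda_1>9/4$ and monotonicity rather than $\lambda_1>\pi^2$; both work), and for $n\ge4$ use concavity of $S_n$ on $(n^2/4,\Lambda_n]$ with the two endpoint values. Your left-endpoint value $\tfrac13S_n(n^2/4)=\tfrac{45}{2}n^4+18n^3-208n^2-64n+384$ agrees with the paper.

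The gap is the right endpoint, which is the only other inequality the theorem needs: you never actually prove $S_n(\Lambda_n)>0$. What you give is an asymptotic heuristic using $\sim$, an unproved claim that $S_n'(\alpha^2)>0$ ``is checked directly'', and a menu of possible continuations. These involve an unspecified threshold $m_0$ and numerical checks for an unspecified range $4\le n\le n_0$. As written, that step is a plan, not a proof.

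Your diagnosis of the obstacle is also off. The $n^5$ terms cancel, but the surviving $n^4$ coefficient is rational and positive, so nothing delicate hinges on the square-root terms. The step closes by exact algebra. With $m=\sqrt{n/2+2}$ you have $\Lambda_n=\frac{(n+2)(n+6)}{4}+(n+2)m$. Reducing with $m^2=(n+4)/2$ gives
\[
\Lambda_n^2=\frac{(n+2)^2(n^2+20n+68)}{16}+\frac{(n+2)^2(n+6)}{2}\,m .
\]
Substituting into \eqref{eq:S} and collecting terms, the coefficient of $m$ is
\[
12(n+2)\bigl[(n+4)(2n^2+5n-22)-(2n-7)(n+2)(n+6)\bigr]=24(n+2)(2n^2+15n-2).
\]
Altogether this gives
\[
S_n(\Lambda_n)=\tfrac{87}{2}n^4+234n^3+336n^2+504n+840+24(n+2)(2n^2+15n-2)\,m,
\]
exactly the paper's formula, and every term is positive for all $n\ge1$. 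So you need no large-$n$ asymptotics, no numerical checks and no sharper values of $j_{n/2,1}$. Once this computation is inserted, your argument coincides with the paper's proof.
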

\begin{proof}
Suppose first that $n\geq4$. We first compute the two endpoint values
of $S_n$ on $[n^2/4,\Lambda_n]$.
At the lower endpoint in \eqref{eq:zero-bounds}, substituting
$s=n^2/4$ into \eqref{eq:S} gives
\begin{equation}\label{eq:lower-polynomial}
 \begin{split}
 S_n(n^2/4)
 &=\frac{135}{2}n^4+54n^3-624n^2-192n+1152\\
 &\geq S_4(4)=11136>0.
 \end{split}
\end{equation}
The inequality \eqref{eq:lower-polynomial} uses that this polynomial is increasing for $n\geq4$,
since
\[
 \frac{d}{dn}S_n(n^2/4)
 =n(270n^2-1248)+162n^2-192
 \geq3072n+2400>0.
\]
At the upper endpoint $\Lambda_n$ defined in \cref{lem:zero-bounds},
substituting $s=\Lambda_n$ into \eqref{eq:S} and simplifying gives
\begin{equation}\label{eq:upper-polynomial}
 \begin{split}
 S_n(\Lambda_n)={}&\frac{87}{2}n^4+234n^3+336n^2+504n+840\\
 &+24(n+2)(2n^2+15n-2)\sqrt{\frac n2+2}>0.
 \end{split}
\end{equation}
All terms in \eqref{eq:upper-polynomial} are positive for $n\geq4$,
since $2n^2+15n-2\geq90$.
By \eqref{eq:S}, $S_n(s)$ is a concave quadratic polynomial in $s$,
so its value on this interval is bounded below by the minimum of
its two endpoint values. Since $n^2/4<\lambda_1\leq\Lambda_n$ by
\cref{lem:zero-bounds}, we obtain
\[
 S_n(\lambda_1)\geq
 \min\{S_n(n^2/4),S_n(\Lambda_n)\}>0.
\]

For $n=3$,
\begin{align*}
 S_3(s)&=24s^2+924s-315,\\
 S_3'(s)&=48s+924>0\qquad(s>0).
\end{align*}
The lower bound in \eqref{eq:zero-bounds} gives $\lambda_1>9/4$, and hence
\[
 S_3(\lambda_1)>S_3(9/4)=\frac{3771}{2}>0.
\]
The denominator in
\eqref{eq:Gamma-S} is positive, completing the proof.
\end{proof}
\section{Proof of the rigidity theorem}\label{sec:rigidity}

Finally, we give the proof of Theorem \ref{thm:main}.

\begin{proof}[Proof of Theorem~\ref{thm:main}]
By \cref{prop:scalar-reduction},
\begin{equation}\label{eq:scalar-zero}
 S\equiv0\quad\text{on }V.
\end{equation}
If $n=2$, the identity
$R_{ijkl}=(S/2)(g_{ik}g_{jl}-g_{il}g_{jk})$ already gives $\Rm=0$.

Since $\Gf[f_*]=\Gamma_n>0$ and $\Gf$ is continuous
on $\Hn$, the density of $\Dn$ in $\Hn$ allows us
to choose $f_{\varepsilon}\in\Dn$ such that
\[
\Gf[f_{\varepsilon}]>0.
\]
Since $f_{\varepsilon}$ vanishes near $r=1$ and $U>0$ on
$[0,1)$, the ratio $f_{\varepsilon}/U$ is bounded.
Consequently, for any fixed trace-free symmetric
tensor $A$, we have
\[
U(r)+tf_{\varepsilon}(r)A_{ij}\theta^i\theta^j\ge0
\]
for all sufficiently small $t>0$.
Thus $w_{t,f_{\varepsilon},A}$ is an admissible test function. Applying the expansion in \cref{prop:corrected}, we obtain
\begin{align}
 1
 &\leq\frac{\CN(V,g)}{\CN(\R^n)}
 \leq\frac{\QN^g(w_{t,f_\varepsilon,\Ric(p)})}{\CN(\R^n)}
 \notag\\
 &=1-t^2\bigl(\Gf[f_\varepsilon]|\Ric(p)|^2
                      +B_n|W(p)|^2\bigr)+o(t^2)
  \notag\\
  &\Longrightarrow\quad
  \Gf[f_\varepsilon]|\Ric(p)|^2+B_n|W(p)|^2\leq o(1).
 \label{eq:rigidity-quotient}
\end{align}
Letting $t\to0$ in \eqref{eq:rigidity-quotient}, we obtain
\[
 \Gf[f_\varepsilon]|\Ric(p)|^2+B_n|W(p)|^2\leq0.
\]
 $B_n>0$ by \cref{prop:corrected}.
Hence $\Ric(p)=W(p)=0$, and \eqref{eq:weyl-decomposition}
implies $\Rm(p)=0$. Since $p$ was arbitrary, the conclusion follows.
\end{proof}

\appendix

\section{Appendix}\label[appendix]{app:auxiliary}

\subsection{Bessel functions and recurrence identities}\label[subsection]{app:bessel}

For $\beta\geq0$ and $s>0$, the Bessel function of the first kind
is defined by
\begin{equation}\label{eq:bessel-series-appendix}
 J_\beta(s)=\sum_{k=0}^{\infty}
 \frac{(-1)^k}{k!\,\Gamma(k+\beta+1)}
 \left(\frac{s}{2}\right)^{2k+\beta},
\end{equation}
where $\Gamma$ denotes the Gamma function.
The series and its derivatives converge uniformly on compact
subsets of $(0,\infty)$, so termwise differentiation is valid.
The recurrence identities used in the main text are
\begin{equation}\label{eq:bessel-recurrences}
 \begin{aligned}
  J_\beta'(s)&=\frac{\beta}{s}J_\beta(s)-J_{\beta+1}(s),\\
  J_{\beta+1}'(s)&=J_\beta(s)-\frac{\beta+1}{s}J_{\beta+1}(s),\\
  J_{\beta+2}(s)&=\frac{2(\beta+1)}{s}J_{\beta+1}(s)-J_\beta(s).
 \end{aligned}
\end{equation}

Termwise differentiation gives
\begin{align*}
 J_\beta'(s)
 &=\sum_{k=0}^{\infty}
   \frac{(-1)^k(2k+\beta)s^{2k+\beta-1}}
        {2^{2k+\beta}k!\,\Gamma(k+\beta+1)}\\
 &=\frac{\beta}{s}J_\beta(s)
   +\sum_{k=1}^{\infty}
   \frac{(-1)^k\,2k\,s^{2k+\beta-1}}
        {2^{2k+\beta}k!\,\Gamma(k+\beta+1)}\\
 &\overset{k=j+1}{=}\frac{\beta}{s}J_\beta(s)
   -\sum_{j=0}^{\infty}
   \frac{(-1)^j s^{2j+\beta+1}}
        {2^{2j+\beta+1}j!\,\Gamma(j+\beta+2)}\\
 &=\frac{\beta}{s}J_\beta(s)-J_{\beta+1}(s).
\end{align*}
Here we used \eqref{eq:bessel-series-appendix} and $k/k!=1/(k-1)!$.
Likewise,
\begin{align*}
 J_{\beta+1}'(s)
 &=\sum_{k=0}^{\infty}
   \frac{(-1)^k(2k+\beta+1)s^{2k+\beta}}
        {2^{2k+\beta+1}k!\,\Gamma(k+\beta+2)}\\
 &=-\frac{\beta+1}{s}J_{\beta+1}(s)
   +\sum_{k=0}^{\infty}
   \frac{(-1)^k\,2(k+\beta+1)s^{2k+\beta}}
        {2^{2k+\beta+1}k!\,\Gamma(k+\beta+2)}\\
 &=-\frac{\beta+1}{s}J_{\beta+1}(s)
   +\sum_{k=0}^{\infty}
   \frac{(-1)^k s^{2k+\beta}}
        {2^{2k+\beta}k!\,\Gamma(k+\beta+1)}\\
 &=J_\beta(s)-\frac{\beta+1}{s}J_{\beta+1}(s).
\end{align*}
The simplification uses
$\Gamma(k+\beta+2)=(k+\beta+1)\Gamma(k+\beta+1)$,
together with \eqref{eq:bessel-series-appendix}.
Combining these identities, we obtain
\begin{align*}
 J_{\beta+2}(s)
 &=\frac{\beta+1}{s}J_{\beta+1}(s)-J_{\beta+1}'(s)\\
 &=\frac{\beta+1}{s}J_{\beta+1}(s)
   -\left(J_\beta(s)-\frac{\beta+1}{s}J_{\beta+1}(s)\right)\\
 &=\frac{2(\beta+1)}{s}J_{\beta+1}(s)-J_\beta(s).
\end{align*}
We used the first identity in \eqref{eq:bessel-recurrences}
at order $\beta+1$ and then the second.

Finally,
\begin{align}
 J_\beta''(s)
 &=\frac{\beta}{s}J_\beta'(s)
   -\frac{\beta}{s^2}J_\beta(s)-J_{\beta+1}'(s)\notag\\
 &=\frac{\beta}{s}J_\beta'(s)
   -\frac{\beta}{s^2}J_\beta(s)-J_\beta(s)
   +\frac{\beta+1}{s}J_{\beta+1}(s)\notag\\
 &=\frac{\beta}{s}J_\beta'(s)
   -\frac{\beta}{s^2}J_\beta(s)-J_\beta(s)
   +\frac{\beta+1}{s}
       \left(\frac{\beta}{s}J_\beta(s)-J_\beta'(s)\right)\notag\\
 &=-\frac1sJ_\beta'(s)
   +\left(\frac{\beta^2}{s^2}-1\right)J_\beta(s)\notag\\
 &\Longrightarrow\quad
 s^2J_\beta''(s)+sJ_\beta'(s)+(s^2-\beta^2)J_\beta(s)=0.
 \label{eq:bessel-equation}
\end{align}
The first equality differentiates the first identity in
\eqref{eq:bessel-recurrences}; the substitutions use its first
two identities. The final line is the Bessel equation.

\begin{proof}[Proof of Lemma~\ref{lem:profile}]
The series expansion of $J_\nu$ shows that
$r^{-\nu}J_\nu(\kappa r)$ has a convergent power series in $r^2$
near zero. Hence it is a smooth function of $r^2$ there. The zero $\kappa$ is simple. Indeed, if
$J_{\nu+1}(\kappa)=J_{\nu+1}'(\kappa)=0$,
uniqueness for the Bessel equation at $\kappa>0$ would imply
$J_{\nu+1}\equiv0$, a contradiction.
Since $J_{\nu+1}>0$ on $(0,\kappa)$, it follows that
$J_{\nu+1}'(\kappa)<0$. The second identity in \eqref{eq:bessel-recurrences} gives
$J_\nu(\kappa)=J_{\nu+1}^{\prime}(\kappa)<0$.
Using the first identity in \eqref{eq:bessel-recurrences}, we obtain
\[
 F'(r)=-\frac{\kappa r^{-\nu}J_{\nu+1}(\kappa r)}{J_\nu(\kappa)}>0.
\]
Since $F(1)=1$, this proves positivity of $U$.
\end{proof}

\subsection{Radial integral identities}\label[subsection]{app:radial-integrals}

\begin{proof}[Proof of Lemma~\ref{lem:radial-integrals}]
For $M_0$,
\begin{align*}
 M_0
 &=\int_0^1Fr^{n-1}\dd\\
 &=-\frac1{\lambda_1}\int_0^1(r^{n-1}F')'\dd\\
 &=-\frac1{\lambda_1}\bigl[r^{n-1}F'\bigr]_0^1=0.
\end{align*}
For $k\geq2$,
\begin{align*}
 M_k
 &=\int_0^1Fr^{n-1+k}\dd\\
 &=-\frac1{\lambda_1}\int_0^1r^k(r^{n-1}F')'\dd\\
 &=-\frac1{\lambda_1}\bigl[r^{n+k-1}F'\bigr]_0^1
       +\frac{k}{\lambda_1}\int_0^1F'r^{n+k-2}\dd\\
 &=\frac{k}{\lambda_1}\bigl[r^{n+k-2}F\bigr]_0^1
       -\frac{k(n+k-2)}{\lambda_1}\int_0^1Fr^{n+k-3}\dd\\
 &=\frac{k}{\lambda_1}
       -\frac{k(n+k-2)}{\lambda_1}M_{k-2}.
\end{align*}
These calculations use the divergence form
$(r^{n-1}F')'=-\lambda_1 r^{n-1}F$ of \eqref{eq:profile-ode}
and the definition of $M_k$ in \eqref{eq:FL-integrals}.
Here and in the integrations below, we use $F(1)=1$, $F'(1)=0$,
and $F=O(1)$, $F'=O(r)$ near zero.
In particular, the boundary terms at zero vanish.
This proves the value of $M_0$ and \eqref{eq:M-rec}.

For $k\geq2$, integration by parts gives
\begin{align*}
 I_k
 &=\int_0^1(F')^2r^{n-1+k}\dd\\
 &=\bigl[FF'r^{n-1+k}\bigr]_0^1
       -\int_0^1F\left(F''+\frac{n-1+k}{r}F'\right)
                         r^{n-1+k}\dd\\
 &=\lambda_1 L_k-k\int_0^1FF'r^{n+k-2}\dd\\
 &=\lambda_1 L_k-\frac{k}{2}\bigl[F^2r^{n+k-2}\bigr]_0^1
       +\frac{k(n+k-2)}2\int_0^1F^2r^{n+k-3}\dd\\
 &=\lambda_1 L_k-\frac{k}{2}
       +\frac{k(n+k-2)}2L_{k-2}.
\end{align*}
We used $U'=-F'$ from \eqref{eq:profile},
the differential equation \eqref{eq:profile-ode},
and the definitions \eqref{eq:FL-integrals} and
\eqref{eq:I-integrals}. This proves \eqref{eq:I-rec}.
For $k=0$, the same integration by parts gives
\begin{align*}
 I_0
 &=\int_0^1(F')^2r^{n-1}\dd\\
 &=\bigl[FF'r^{n-1}\bigr]_0^1
       -\int_0^1F(r^{n-1}F')'\dd\\
 &=\lambda_1\int_0^1F^2r^{n-1}\dd
 =\lambda_1 L_0.
\end{align*}

For every integer $k\geq0$, we also have
\begin{align}
 \lambda_1\bigl((n+k)L_k-1\bigr)
 &=-\lambda_1\int_0^1(F^2)'r^{n+k}\dd\notag\\
 &=-2\lambda_1\int_0^1FF'r^{n+k}\dd\notag\\
 &=2\int_0^1F''F'r^{n+k}\dd+2(n-1)I_k\notag\\
 &=\bigl[(F')^2r^{n+k}\bigr]_0^1
       -(n+k)I_k+2(n-1)I_k\notag\\
 &=(n-2-k)I_k.
 \label{eq:pohozaev}
\end{align}
Here the first equality uses $F(1)=1$ and the definition of $L_k$;
the remaining equalities use \eqref{eq:profile-ode} and integration
by parts.

Taking $k=0$ in \eqref{eq:pohozaev} and using $I_0=\lambda_1 L_0$,
\[
 2\lambda_1 L_0
 =n\lambda_1 L_0-(n-2)I_0
 =\lambda_1.
\]
Consequently,
\begin{align*}
 L_0&=\frac12,\qquad I_0=\lambda_1 L_0=\frac{\lambda_1}{2},\\
 H_{1,0}&=\frac1n-M_0=\frac1n,\\
 H_{2,0}&=\frac1n-2M_0+L_0
          =\frac1n+\frac12=\frac{n+2}{2n}.
\end{align*}
The last two calculations use \eqref{eq:H-from-F}.
This proves \eqref{eq:radial-integrals-k0}.

For $k\geq2$,
\begin{align*}
 2\lambda_1(k+1)L_k
 &=\lambda_1(n+k)L_k-\lambda_1(n-2-k)L_k\\
 &=\lambda_1+(n-2-k)(I_k-\lambda_1 L_k)\\
 &=\lambda_1-\frac{k}{2}(n-2-k)
       +\frac{k}{2}(n-2-k)(n+k-2)L_{k-2}.
\end{align*}
The second equality uses \eqref{eq:pohozaev}, and the last uses
\eqref{eq:I-rec}. Dividing by $2\lambda_1(k+1)$ gives
\eqref{eq:L-rec}.

For the values with $k=2,4$, we compute
\begin{align*}
 M_2&=\frac2{\lambda_1}-\frac{2n}{\lambda_1}M_0
       =\frac2{\lambda_1},\\
 M_4&=\frac4{\lambda_1}-\frac{4(n+2)}{\lambda_1}M_2
       =\frac4{\lambda_1}-\frac{8(n+2)}{\lambda_1^2}.
\end{align*}
These are the substitutions $k=2,4$ in \eqref{eq:M-rec}, proving
\eqref{eq:M24}.

For the squared integrals,
\begin{align*}
 L_2
 &=\frac{\lambda_1-(n-4)+n(n-4)L_0}{6\lambda_1}\\
 &=\frac{\lambda_1+(n-2)(n-4)/2}{6\lambda_1}\\
 &=\frac16+\frac{(n-2)(n-4)}{12\lambda_1},\\
 L_4
 &=\frac{\lambda_1-2(n-6)+2(n-6)(n+2)L_2}{10\lambda_1}\\
 &=\frac1{10}+\frac{n-6}{5\lambda_1}
       \left[\frac{n-4}{6}
         +\frac{(n+2)(n-2)(n-4)}{12\lambda_1}\right]\\
 &=\frac1{10}+\frac{(n-6)(n-4)}{30\lambda_1}
       +\frac{(n-6)(n-4)(n-2)(n+2)}{60\lambda_1^2}.
\end{align*}
Here we used \eqref{eq:L-rec}, first with $k=2$ and $L_0=1/2$,
and then with $k=4$ and the computed value of $L_2$.
This proves \eqref{eq:L24}.

For the energy integrals,
\begin{align*}
 I_2
 &=\lambda_1 L_2-1+nL_0\\
 &=\frac{\lambda_1}{6}+\frac{(n-2)(n-4)}{12}
                         -1+\frac n2\\
 &=\frac{\lambda_1}{6}+\frac{n^2-4}{12},\\
 I_4
 &=\lambda_1 L_4-2+2(n+2)L_2\\
 &=\frac{\lambda_1}{10}
       +\left[\frac{(n-6)(n-4)}{30}-2+\frac{n+2}{3}\right]\\
 &\quad+\frac{(n^2-4)(n-4)}{60\lambda_1}(n-6+10)\\
 &=\frac{\lambda_1}{10}+\frac{n^2-16}{30}
       +\frac{(n^2-4)(n^2-16)}{60\lambda_1}.
\end{align*}
These calculations use \eqref{eq:I-rec} and the preceding values
of $L_0,L_2,L_4$, giving \eqref{eq:I24}.

Finally,
\begin{align*}
 H_{1,2}
 &=\frac1{n+2}-M_2
   =\frac1{n+2}-\frac2{\lambda_1},\\
 H_{2,2}
 &=\frac1{n+2}-2M_2+L_2\\
 &=\frac1{n+2}-\frac4{\lambda_1}
       +\frac16+\frac{(n-2)(n-4)}{12\lambda_1},\\
 H_{1,4}
 &=\frac1{n+4}-M_4
   =\frac1{n+4}-\frac4{\lambda_1}+\frac{8(n+2)}{\lambda_1^2},\\
 H_{2,4}
 &=\frac1{n+4}-2M_4+L_4\\
 &=\frac1{n+4}-\frac8{\lambda_1}+\frac{16(n+2)}{\lambda_1^2}
       +\frac1{10}\\
 &\quad+\frac{(n-6)(n-4)}{30\lambda_1}
       +\frac{(n-6)(n-4)(n-2)(n+2)}{60\lambda_1^2}.
\end{align*}
We used \eqref{eq:H-from-F} and the values of $M_2,M_4,L_2,L_4$.
This proves \eqref{eq:H2} and \eqref{eq:H4-expanded}.
\end{proof}

\textbf{AI Disclosure:} The research ideas and overall direction of this work were solely conceived by the authors. Specific calculations in Lemma~\ref{prop:optimal-value}, Lemma~\ref{lem:zero-bounds}, Theorem~\ref{thm:positivity}, and the Appendix were performed with the aid of ChatGPT. The authors have thoroughly reviewed all AI-assisted content and take full responsibility for the accuracy and integrity of the entire manuscript.


\begin{thebibliography}{9}

\bibitem{CarlenLoss}
E.~A. Carlen and M. Loss,
\emph{Sharp constant in Nash's inequality},
International Mathematics Research Notices \textbf{1993} (1993), no.~7, 213--215.

\bibitem{CarlenLieb}
E.~A. Carlen and E.~H. Lieb,
\emph{A dual form of the sharp Nash inequality and its weighted generalization},
Bulletin of the London Mathematical Society \textbf{51} (2019), no.~1, 129--144.

\bibitem{Humbert}
E. Humbert,
\emph{Extremal functions for the sharp $L^2$-Nash inequality},
Calculus of Variations and Partial Differential Equations \textbf{22} (2005), no.~1, 21--44.


\bibitem{Chambers}
Ll.~G.~Chambers,
\emph{An upper bound for the first zero of Bessel functions},
Math. Comp. \textbf{38} (1982), no.~158, 589--591.


\bibitem{ChengLS}
L. Cheng,
\emph{The power series expansions of logarithmic Sobolev, $\mathcal{W}$-functionals and scalar curvature rigidity},
arXiv:2409.06117v5 (2025).

\bibitem{ChengGN}
L. Cheng,
\emph{On the rigidity of manifolds with respect to the Gagliardo--Nirenberg inequalities},
Journal de Math\'ematiques Pures et Appliqu\'ees \textbf{215} (2026), Article~103960.




\bibitem{Davies1989}
E.~B. Davies,
\emph{Heat Kernels and Spectral Theory},
Cambridge Tracts in Mathematics, vol.~92,
Cambridge University Press, Cambridge, 1989.

\bibitem{DruetHebeyVaugon1999}
O.~Druet, E.~Hebey and M.~Vaugon,
\emph{Optimal Nash's inequalities on Riemannian
manifolds: The influence of geometry},
International Mathematics Research Notices
 (1999), no.~14, 735--779.

          \bibitem{Gromovlecture}
          Mikhael Gromov, \textit{Four lectures on scalar curvature}, 2019. arXiv:1908.10612.
          
          \bibitem{Gdozen}
          Mikhael Gromov. A dozen problems, \textit{questions and conjectures about positive scalar curvature.} Foundations of Mathematics and Physics One Century After Hilbert: New Perspectives, 2018: 135-158.
          
          

\bibitem{Nash1958}
J.~Nash,
\emph{Continuity of solutions of parabolic and elliptic equations},
American Journal of Mathematics \textbf{80} (1958),
no.~4, 931--954.


\bibitem{SaloffCoste2002}
L.~Saloff-Coste,
\emph{Aspects of Sobolev-Type Inequalities},
London Mathematical Society Lecture Note Series,
vol.~289, Cambridge University Press, Cambridge, 2002.

\bibitem{Brendle}
          
          Brendle, S.
          \textit{Rigidity phenomena involving scalar curvature}. Surveys in differential geometry. Vol. XVII, 179-202.
          Surv. Differ. Geom., 17

\bibitem{SchoenYau}
R. Schoen and S.-T. Yau,
\emph{Lectures on Differential Geometry},
Conference Proceedings and Lecture Notes in Geometry and Topology, vol.~I,
International Press, Cambridge, MA, 1994.

\end{thebibliography}
\end{document}